\newcommand{\ds}{\displaystyle}
\newcommand{\tensor}{\otimes}
\newcommand{\leftsub}[2]{{\vphantom{#2}}_{#1}{#2}} 
\newcommand{\xycirc}[2]{\leftsub{#1}{\circ}_{#2}}
\newcommand{\op}{\mathcal}
\newcommand{\cdc}{,\dots,}
\newcommand{\FTGK}{\mathsf{FT}}
\DeclarePairedDelimiter\floor{\lfloor}{\rfloor}
\numberwithin{equation}{section}
\newtheorem{theorem}{Theorem}[section]
\theoremstyle{plain}
\newtheorem{conjecture}[theorem]{Conjecture}
\newtheorem{corollary}[theorem]{Corollary}
\newtheorem{lemma}[theorem]{Lemma}
\newtheorem{proposition}[theorem]{Proposition}
\theoremstyle{definition}
\newtheorem{definition}[theorem]{Definition}
\newtheorem{remark}[theorem]{Remark}
\begin{document}
	
\title{Stirling Decomposition of Graph Homology in Genus $1$.} 


\author{Benjamin C. Ward}
\email{benward@bgsu.edu}
\thanks{It is my sincere pleasure to thank Ralph Kaufmann, Martin Markl and Sasha Voronov for their invitation to participate in the Special Session on Higher Structures in Topology, Geometry and Physics at the AMS Spring Central Meeting, March 2022.  I would also like to thank an anonymous referee for comments which improved an earlier version of this article.}

\subjclass[2020]{55U15, 18M70}

\date{}

\begin{abstract}
We prove that commutative graph homology in genus $g=1$ with $n\geq 3$ markings has a direct sum decomposition whose summands have rank given by Stirling numbers of the first kind.  These summands are computed as the homology of complexes of certain decorated trees which have an elementary combinatorial description. 
\end{abstract}


\maketitle

\section{Introduction}

The purpose of this paper is to compute the homology of a family of chain complexes which arose in my study of higher operations on graph homology.  We will denote these chain complexes as $\op{S}_{n,k}$.  Although they arose from the study of some rather technical topics (eg $\infty$-modular operads and the Feynman transform), it is possible to give a very elementary description of them in terms of the combinatorics of decorated trees, and that is the approach we take here-in.  

The computation of the homology $H_\ast(\op{S}_{n,k})$ is a new result, given for the first time in this article.  Specifically, we prove (Theorem $\ref{main}$) that the Betti numbers of $\op{S}_{n,k}$ are
\begin{equation}\label{srank}
	\beta_i(\op{S}_{n,k}) =  \begin{cases} |s_{n,k}| & \text{ if } i=n \\ 0 & \text{ else,}    \end{cases}
\end{equation}
where $s_{n,k}$ denotes the signed Stirling number of the first kind, whose absolute value counts the number of permutations of the set $\{1\cdc n\}$ which can be written as a product of $k$ disjoint cycles.  

Let me briefly explain, in basic terms, the context in which this result can be applied.    By (commutative) graph homology, we refer to a chain complex spanned by isomorphism classes of graphs.  The degree of a graph is its number of edges, and the differential is given by a signed sum of edge contractions.  This complex splits over the genus $g$ of the graph.  A variant of this construction allows $n$ labeled markings of the vertices.  In this way, for each pair $(g,n)$ we get a chain complex of such graphs, and we endeavor to compute its rational homology.  The expert reader has surely noticed several important technical details which we've glossed over in this informal description.

This article concerns the calculation in the case $g=1$ and $n\geq 3$. In this case, the rational homology may be determined from \cite[Theorem 1.2]{CGP2}, where the authors determine the homotopy type of a cell complex $\Delta_{1,n}$ whose reduced cellular chain complex coincides (up to a shift in degree) with the graph complex described above.  In particular they show
\begin{equation}\label{rank}
	\beta_i(\Delta_{1,n}) =  \begin{cases}  \ds\frac{(n-1)!}{2} & \text{ if } i=n-1 \\ 0 & \text{ else,}    \end{cases}
\end{equation}
and moreover prove that as an $S_n$-module, $H_{n-1}(\Delta_{1,n})$ is the representation induced from a particular $1$-dimensional representation of the dihedral group.

Our homology calculation above (Equation $\ref{srank}$) gives a new proof of Equation $\ref{rank}$, and it gives a different description of the $S_n$-module structure which adds some computational facility.  Namely we prove (Corollary $\ref{gccor})$ that 
\begin{equation}\label{decomp}
H_{\ast}(\Delta_{1,n+1}) \cong \ds\bigoplus_{i=1}^{\floor{n/2}} H_\ast(\op{S}_{n,2i}),	
\end{equation}
and hence the rank of $H_{n-1}(\Delta_{1,n})$ is equal to the number of permutations in $S_{n-1}$ which can be written using an even number of disjoint cycles (counting fixed points as cycles of length 1).  Depending on the parity of $n$, these coincide with either the even or the odd permutations and in either case, it's exactly half.  Whence our confirmation of Equation $\ref{rank}$.

In addition to giving a new proof of Equation $\ref{rank}$, the decomposition in Equation $\ref{decomp}$ tells us something new about the $S_{n+1}$-module structure.  The isomorphism in Equation $\ref{decomp}$ holds $S_{n+1}$ equivariantly, meaning that the irreducible decomposition of a given $H_n(\Delta_{1,n+1})$ also splits accordingly.  Moreover, we conjecture (Section $\ref{ghsec}$) that each of the sequences $H_{n}(\op{S}_{n,n-i})$ is representation stable (after tensoring with the sign representation).  This means that the irreducible decomposition of a given $H_n(\Delta_{1,n+1})$ can be broken into pieces, all but one of which is determined by lower $n$.  It is not too difficult to compute the first few non-trivial stable sequences by hand, see Subsection $\ref{rs}$, although the representation stability aspects fall somewhat outside of our parameters here. 

Indeed the goal of this paper is to give a broadly accessible description of the complexes $\op{S}_{n,k}$ and to compute their homology.  We call these complexes ``Stirling complexes'' and they are defined in Section 3.  Section 2 gives a brief recollection of Stirling numbers and proves an identity (Proposition $\ref{tweedie}$) which is the shadow of our main theorem when interpreted in terms of the Euler characteristic.  The homology calculation is given in Section $\ref{homologysec}$, and we conclude with a discussion of the decomposition of Equation $\ref{decomp}$ in Section $\ref{ghsec}$.

Finally, it should be pointed out that in attempting to present this problem in a hands-on and elementary way, we have necessarily but perhaps unfairly circumvented the use of more sophisticated tools which could also be used to attack this problem.  This includes the theory of Grobner bases \cite{DK}, since our main calculation could be recast as the Koszulity of Lie graph homology in genus $1$, viewed as an infinitesmal bimodule over its underlying genus 0 operad.  The novice reader should also be advised that this article has largely bypassed a discussion of the influential and sophisticated literature on graph complexes for which the works \cite{Kont}, \cite{WTw}, \cite{KWM}, \cite{MGC} may serve as a possible starting point for further reading.

\subsection*{Conventions}  Symmetric groups are denoted by $S_n$, or $S_X$ for a finite set $X$.  The irreducible representation of the symmetric group associated to a partition $\lambda$ will be denoted $V_\lambda$.  We denote shift operators for graded vector spaces by $\Sigma^\pm$.  We work over the field of rational numbers throughout.  


\section{Stirling Numbers}

Let $S_n$ denote the symmetric group of permutations of the set $\{1\cdc n\}$.  Let $S_{n,k}\subset S_n$ denote the subset consisting of permutations having $k$ disjoint cycles, where fixed elements are considered cycles of length $1$.  For example, the magnitude of the sets $S_{n,k}$ for $n\leq 7$ is given in the following table.

\begin{equation}\label{stable}
\begin{tabular}{c|c|c|c|c|c|c|c}
	\diagbox{n}{k}  & 1 & 2 & 3 & 4 & 5 & 6 & 7 \\ \hline
	1 & 1  & & & & & &\\ \hline
	2 & 1  & 1 & & & & &\\ \hline
	3 & 2  & 3 & 1 & & & &\\ \hline
	4 & 6  & 11 & 6 & 1 & & &\\ \hline
	5 & 24  & 50 & 35& 10& 1 & &\\ 	\hline	
	6 & 120  & 274 & 225 & 85 & 15 & 1 &\\ \hline
	7 & 720  & 1764 & 1624& 735& 175 & 21 &1 \\ 	
\end{tabular}
\end{equation}
By convention we consider $S_{n,k}$ to be the empty set, so the blanks should be considered zeros.

This triangle of numbers has been long studied under the name of the (unsigned) Stirling numbers of the first kind.  We define the signed Stirling numbers of the first kind to be $$s_{n,k}:= (-1)^{n-k}|S_{n,k}|.$$  From now on we will just call these the ``Stirling numbers'' for short.

We refer to \cite[Chapter 8]{EC} for a comprehensive overview of the Stirling numbers and their history.  In this article we will restrict attention to the small bits and pieces of this theory which we need.  We first mention a few basic facts that we will use below.  Their verification is immediate.
\begin{lemma}\label{basics}  Stirling numbers of the first kind satisfy:
	\begin{enumerate}
		\item $\sum_{k} |s_{n,k}| = n!$
		\item $s_{n,n-1} = -\ds{n \choose 2}$
		\item $s_{n,n-2} = \ds\frac{1}{4}\ds{n \ds\choose 3}(3n-1)$
		\item If $n\geq 2$ then $\sum_{k} s_{n,k}=0$.
	\end{enumerate}
\end{lemma}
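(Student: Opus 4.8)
The plan is to reduce all four identities to the single generating-function identity
\begin{equation*}
\sum_k |S_{n,k}|\, x^k = x(x+1)(x+2)\cdots(x+n-1),
\end{equation*}
the rising factorial, together with its signed counterpart
\begin{equation*}
\sum_k s_{n,k}\, x^k = x(x-1)(x-2)\cdots(x-n+1),
\end{equation*}
the falling factorial. The latter follows from the former by substituting $x\mapsto -x$, using that $(-1)^k(-1)^n = (-1)^{n-k}$ and the definition $s_{n,k}=(-1)^{n-k}|S_{n,k}|$. The rising-factorial identity itself comes from the standard cycle-insertion recursion $|S_{n,k}| = (n-1)|S_{n-1,k}| + |S_{n-1,k-1}|$: the element $n$ is either inserted immediately after one of the $n-1$ existing elements in its cycle (preserving the cycle count, $n-1$ choices) or made its own fixed point (raising the cycle count by one). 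This recursion is exactly multiplication of the polynomial $\sum_k|S_{n-1,k}|x^k$ by $(x+n-1)$, and with base case $x$ for $n=1$ it yields the displayed product.

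For part (1), I would set $x=1$ in the rising-factorial identity, giving $\sum_k|S_{n,k}| = 1\cdot 2\cdots n = n!$; equivalently, the sets $S_{n,k}$ partition $S_n$ according to cycle number, so their sizes sum to $|S_n|=n!$, and $|s_{n,k}|=|S_{n,k}|$. For part (4), I would set $x=1$ in the falling-factorial identity: for $n\geq 2$ the product $x(x-1)\cdots(x-n+1)$ contains the factor $(x-1)$, which vanishes at $x=1$, so $\sum_k s_{n,k}=0$.

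For parts (2) and (3) I would count directly by cycle type rather than extract coefficients. A permutation of $\{1\cdc n\}$ with $n-1$ cycles must consist of a single transposition together with $n-2$ fixed points, and there are $\binom{n}{2}$ of these; since $(-1)^{n-(n-1)}=-1$ this gives $s_{n,n-1}=-\binom{n}{2}$. For $n-2$ cycles there are exactly two admissible types: a single $3$-cycle with $n-3$ fixed points, of which there are $2\binom{n}{3}$ (two cyclic orderings per chosen triple), and a pair of disjoint transpositions with $n-4$ fixed points, of which there are $3\binom{n}{4}$ (three ways to split four chosen elements into two unordered pairs). Factoring $\tfrac14\binom{n}{3}$ out of their sum leaves $8 + 3(n-3) = 3n-1$, and since $(-1)^{n-(n-2)}=+1$ this gives $s_{n,n-2}=\tfrac14\binom{n}{3}(3n-1)$.

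The only step demanding genuine care is part (3): one must be certain to enumerate both contributing cycle types and, for the disjoint transpositions, to account for the $2!$ symmetry when identifying the unordered pair. Everything else is a one-line evaluation at $x=1$ or an elementary count, which is consistent with the claim that the verification is immediate.
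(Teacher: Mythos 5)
Your proposal is correct in every detail, but note that the paper offers no proof at all for this lemma: it simply asserts ``Their verification is immediate'' and moves on, so there is no paper argument to compare yours against. What you have written is the standard verification, and it checks out. The recursion $|S_{n,k}|=(n-1)|S_{n-1,k}|+|S_{n-1,k-1}|$ does encode multiplication by $(x+n-1)$, giving the rising factorial, and the substitution $x\mapsto -x$ correctly converts it to the falling factorial for the signed numbers; evaluation at $x=1$ then gives (1) directly and (4) via the vanishing factor $(x-1)$, which exists precisely when $n\geq 2$ (consistent with the hypothesis, since $\sum_k s_{1,k}=1\neq 0$). Your cycle-type counts for (2) and (3) are also right: a permutation with $n-1$ cycles is a single transposition, giving $\binom{n}{2}$ with sign $(-1)^{n-(n-1)}=-1$; a permutation with $n-2$ cycles is either a $3$-cycle ($2\binom{n}{3}$ of them) or a pair of disjoint transpositions ($3\binom{n}{4}$ of them), and indeed
\begin{equation*}
2\binom{n}{3}+3\binom{n}{4}=\binom{n}{3}\left(2+\frac{3(n-3)}{4}\right)=\frac{1}{4}\binom{n}{3}(3n-1),
\end{equation*}
with sign $(-1)^{2}=+1$. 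One could prove (2) and (3) instead by extracting coefficients of $x^{n-1}$ and $x^{n-2}$ from the falling factorial (elementary symmetric polynomials in $0,1,\dots,n-1$), which would make all four parts flow from the single generating function; your hybrid of generating functions and direct enumeration is equally valid and arguably more transparent for (3).
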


The main theorem of this paper will be a categorification of the following slightly less trivial fact about Stirling numbers.

\begin{proposition}\label{alt}
	\begin{equation}\label{tweedie}
s_{n,k}=\ds\sum_{m=k+1}^{n+1} { m-1 \choose k}s_{n+1,m}.
	\end{equation}
\end{proposition}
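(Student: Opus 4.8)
The plan is to deduce the identity from the standard generating-function description of the signed Stirling numbers of the first kind. Recall (see \cite[Chapter 8]{EC}) that with the sign convention $s_{n,k}=(-1)^{n-k}|S_{n,k}|$ fixed above, these numbers are precisely the coefficients of the falling factorial,
$$
\sum_{k} s_{n,k}\, x^{k} \;=\; x(x-1)(x-2)\cdots(x-n+1).
$$
I would take this as the starting point; if one prefers to stay self-contained it follows by a one-line induction from the recurrence $s_{n+1,m}=s_{n,m-1}-n\,s_{n,m}$, which in turn comes from multiplying the falling factorial at level $n$ by $(x-n)$.

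To handle the right-hand side of \eqref{tweedie}, I would multiply it by $x^{k}$, sum over $k$, and exchange the order of summation:
$$
\sum_{k}\Bigl(\sum_{m} \binom{m-1}{k} s_{n+1,m}\Bigr) x^{k}
= \sum_{m} s_{n+1,m} \sum_{k} \binom{m-1}{k} x^{k}
= \sum_{m} s_{n+1,m}\,(1+x)^{m-1},
$$
the inner sum being evaluated by the binomial theorem. The point of this maneuver is that $\binom{m-1}{k}$ forces $m\geq k+1$, which is exactly the lower limit in \eqref{tweedie}, so nothing is lost or gained in the interchange and the range is automatically correct.

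The key computation is then to recognize this as a shifted falling factorial. Writing $y=1+x$ and applying the level-$(n+1)$ identity gives
$$
\sum_{m} s_{n+1,m}\,(1+x)^{m}
= (1+x)(1+x-1)\cdots(1+x-n)
= (1+x)\cdot x(x-1)\cdots(x-n+1),
$$
since the factor indexed by $j$ is $1+x-j=x-(j-1)$, so for $j=1,\dots,n$ these reproduce the factors of $x(x-1)\cdots(x-n+1)$ while the $j=0$ factor contributes the leading $(1+x)$. As $s_{n+1,0}=0$, the left side has no constant term and is manifestly divisible by $(1+x)$; dividing both sides by $1+x$ as polynomials yields $\sum_{m} s_{n+1,m}(1+x)^{m-1}=x(x-1)\cdots(x-n+1)=\sum_k s_{n,k}x^k$. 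Comparing the coefficient of $x^{k}$ on the two sides gives exactly $s_{n,k}=\sum_{m=k+1}^{n+1}\binom{m-1}{k}s_{n+1,m}$.

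I do not anticipate a genuine obstacle, as the argument is a formal generating-function manipulation. The only points requiring care are the substitution $y=1+x$, which is what converts the $\binom{m-1}{k}$ weights into a clean power of $(1+x)$, and the legitimacy of cancelling the factor $(1+x)$, which is guaranteed by the explicit factorization above. A purely inductive alternative, feeding the recurrence $s_{n+1,m}=s_{n,m-1}-n\,s_{n,m}$ into Pascal's identity $\binom{m-1}{k}=\binom{m-2}{k-1}+\binom{m-2}{k}$, would also work but is messier, and I would only fall back on it if a generating-function-free proof were desired.
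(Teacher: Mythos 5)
Your proof is correct, and it takes a genuinely different route from the paper. You deduce the identity formally from the falling-factorial generating function $\sum_k s_{n,k}x^k = x(x-1)\cdots(x-n+1)$: the binomial weights $\binom{m-1}{k}$ repackage the level-$(n+1)$ polynomial evaluated at $1+x$, and cancelling the visible factor of $(1+x)$ (legitimate in the integral domain $\Q[x]$, since $s_{n+1,0}=0$ supplies the needed divisibility) lands exactly on the level-$n$ polynomial. All steps check: the interchange of summation is harmless because $\binom{m-1}{k}$ vanishes outside the stated range, and the substitution $y=1+x$ shifts the factors correctly. The paper instead gives a bijective/sign-cancellation argument: each term $\binom{m-1}{k}s_{n+1,m}$ counts permutations of $\{0,1,\dots,n\}$ with $k$ distinguished cycles avoiding $0$, the count is stratified by the support $D$ and product $\sigma$ of the distinguished cycles, and the identity $\sum_k s_{n,k}=0$ (for $n\geq 2$) kills every stratum except $D=\{1,\dots,n\}$, leaving a bijection with $S_{n,k}$. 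Your argument is shorter and uses only standard facts, but the paper's choice is deliberate: its stratification-plus-cancellation structure is, as the author says, the ``shadow'' of the proof of Theorem \ref{main}, where the subcomplex $\op{A}$ plays the role of the cancelling strata and the surviving stratum corresponds to the quotient $\op{S}_{n,k}/\op{A}$ whose homology gives $|s_{n,k}|$. So your proof establishes the proposition perfectly well, but it does not foreshadow the filtration argument used in the homology computation, which is the main pedagogical payload of the paper's version.
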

\begin{proof}
We emphasize that these are signed Stirling numbers, so we're considering an alternating sum and this result arises as a corollary of our main theorem (Theorem $\ref{main}$) by taking the Euler characteristic.  However, let us give the combinatorial proof which is the shadow of the proof of our main theorem below.
To unify the two, let us choose to regard $S_{n+1}$ as the permutation group of the set $\{0,1\cdc n\}$ via the isomorphism $\{0,1\cdc n\}\cong \{1\cdc n,n+1\}$ which fixes $1\cdc n$.

Fix $n$ and $k$ with $k \leq n$.  Given a permutation in $S_{n+1}$ with $m$ cycles, there are ${m-1 \choose k}$ ways to choose $k$ cycles which do not contain $0$.  We may therefore view each term in the right hand side of Equation $\ref{tweedie}$ as counting the number of permutations in $S_{n+1}$ with $m$ cycles, $k$ of which are distinguished, subject to the rule that $0$ is not in a distinguished cycle.  Let $X_{m}$ be the set of such choices.  In particular $|X_{m}| = {m-1 \choose k}|S_{n+1,m}|$.

The set $X_{m}$ can be written as a disjoint union indexed by the choice of a non-empty set $D\subset \{1\cdc n\}$ and a permutation $\sigma \in S_{D}$ corresponding to the product of the $k$ distinguished cycles in the given element.  Let $X_{m,D,\sigma}\subset X_m$ be the subset appearing in index $(D,\sigma)$.  Counting the number of elements in each index, we may write:
\begin{equation}\label{tweedie2}
\ds\sum_{m=k+1}^{n+1} { m-1 \choose k}s_{n+1,m} = \sum_{m=k+1}^{n+1} \ds\sum_{D \subset \{1\cdc n\}}\ds\sum_{\sigma \in S_{D,k} } (-1)^{m+n+1}|X_{m,D,\sigma}|
\end{equation}
where $S_{D,k}$ is the set of permutations of $D$ having $k$ cycles.
 
Note that $X_{m,D,\sigma}\subset X_m$ is nonempty only if $m-k\leq n+1-|D|$.  Therefore, reindexing the previous equation we have 
\begin{equation}\label{t3}
\ds\sum_{m=k+1}^{n+1} { m-1 \choose k}s_{n+1,m} =  \ds\sum_{D \subset \{1\cdc n\}}\ds\sum_{\sigma \in S_{D,k} } \sum_{m=k+1}^{n+1-|D|+k} (-1)^{m+n+1}|X_{m,D,\sigma}|
\end{equation}
By considering the product of the non-distinguished cycles, each element in $X_{m,D,\sigma}$ specifies a unique permutation of $\{0\}\cup(\{1\cdc n\}\setminus D)$ having $m-k$ cycles, and conversely, so $|X_{m,D,\sigma}| = (-1)^{m-k+n+1-|D|}s_{n+1-|D|,m-k}$.  Therefore, for each fixed $D$ and $\sigma$ we have:
\begin{align*}
 \sum_{m=k+1}^{n+1-|D|+k} (-1)^{m+n+1}|X_{m,D,\sigma}| = (-1)^{k+|D|} \sum_{m=k+1}^{n+1-|D|+k} s_{n+1-|D|,m-k} \\ = (-1)^{k+|D|} \sum_{i=1}^{n+1-|D|} s_{n+1-|D|,i}. \  \ \ \ \ \ \ 
\end{align*}

Applying the final statement of Lemma $\ref{basics}$, we see that this sum is zero unless $n+1-|D|=1$.    This happens if and only if $D=\{1\cdc n\}$, i.e.\ every non-zero element appears in a distinguished cycle.  Thus Equation $\ref{t3}$ becomes
\begin{equation*}
\ds\sum_{m=k+1}^{n+1} { m-1 \choose k}s_{n+1,m} = (-1)^{n+k} \ds\sum_{\sigma \in S_{n,k} }|X_{k+1,\{1\cdc n\},\sigma}|.
\end{equation*}
Finally, the set $|X_{k+1,\{1\cdc n\},\sigma}|$ counts the number permutations of $S_{n+1}$ which fix $0$ and have $k+1$ cycles, such that the product of the $k$ cycles not containing $0$ is equal to $\sigma$.  Clearly there is exactly one such permutation, namely $(0)\cdot \sigma$, from which we conclude
\begin{equation*}
\ds\sum_{m=k+1}^{n+1} { m-1 \choose k}s_{n+1,m} =  (-1)^{n+k}|S_{n,k}| = s_{n,k}.
\end{equation*}
\end{proof}

\section{Stirling Complexes}

In this section we will define a chain complex $\op{S}_{n,k}$ whose Euler characteristic may be calculated via the alternating sum in Proposition $\ref{alt}$ to satisfy $\chi(\op{S}_{n,k})= s_{n,k}$.  For lack of better terminology we call $\op{S}_{n,k}$ ``Stirling complexes''.

In Section $\ref{homologysec}$ below we will calculate the homology of the Stirling complexes, and in Section $\ref{ghsec}$ we will give an application of this computation to graph homology.  The purpose of this section is simply to give their definition.  The Stirling complexes arose in \cite{WardMP} as certain subcomplexes of the Feynman transform of Lie graph homology.  However, it is possible to give a much more down to earth definition of them, and that is the approach we take here.

Finally, we remark that the reader should be sure to not confuse the chain complex $\op{S}_{n,k}$, defined in this section, with the set $S_{n,k}$ defined in the previous section.  These objects live in different categories, and while both have symmetric group actions, the two actions are not comparable.  One relationship between these two objects will be given later on, in Theorem $\ref{main}$.

\subsection{Decorated Trees}
We will define the Stirling complexes as a span of certain decorated trees.  For a formal definition of a tree we refer to the Appendix, namely subsection $\ref{trees}$.  

Informally, a tree $\mathsf{t}$ is a 1-dimensional CW complex with is connected and simply connected.  The $0$-cells are called vertices, the set of which is denoted $V(\mathsf{t})$.  The $1$-cells are called edges, the set of which is denoted $E(\mathsf{t})$.  An $n$-tree is a tree along with a function $\ell\colon \{0\cdc n\}\to V(\mathsf{t})$ called the leg labeling.  The leg labeling is depicted graphically by $|\ell^{-1}(v)|$ line segments extending from $v$, labeled bijectively by the elements of the set $\ell^{-1}(v)$.  These line segments are called the legs of the tree.  From this perspective what we call ``edges'' might also be called ``internal edges'' in that they run between two vertices where-as legs do not.  

Our formal definition of a graph (Appendix $\ref{graphs}$) views edges as consisting of two halves, called flags, and views legs as consisting of a single flag.  With this convention, every vertex has a set of adjacent flags, which is canonically identified with the union of the sets of adjacent edges and adjacent legs.  The valence of a vertex is defined to be the order of this set.  An $n$-tree may be viewed as a directed graph, with edges directed toward the leg labeled by $0$.  This in turn specifies a unique output flag at each vertex, the remainder of the flags are called inputs.  A tree is called stable if it has at least 2 input flags.

For the remainder of this article, including in the following definition, all trees are assumed to be stable $n$-trees for some $n$.  The Stirling complexes will be spans of such trees along with additional decorations.

\begin{definition}  A Stirling tree $(\mathsf{t}, v,A)$ is a tree $\mathsf{t}$ along with the choice of the following additional data:
	\begin{enumerate}
		\item a vertex $v$ of $\mathsf{t}$, which we call the distinguished vertex (or DV for short).
		\item a distinguished subset $A$ of the input flags adjacent to the distinguished vertex, whose elements we call alternating flags, such that $|A|\geq 2$.
	\end{enumerate}
\end{definition}
A Stirling tree having $n+1$ legs and $k$ alternating flags is said to be of type $(n,k)$.  See Figure $\ref{fig:st}$.  We will often refer to Stirling trees simply as $\mathsf{t}$, leaving the decorations $v$ and $A$ implicit.

\begin{figure}
	\centering
	\includegraphics[scale=.6]{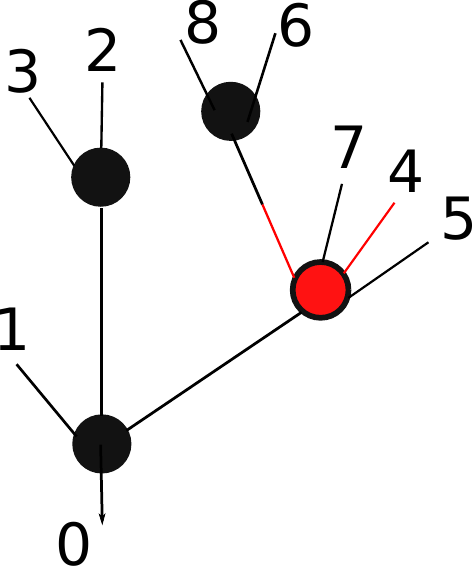}
	\caption{A Stirling tree with four vertices, three edges and nine legs.  The distinguished vertex and the alternating flags are depicted in red.  This Stirling tree is said to be of type $(8,2)$. 
	}
	\label{fig:st}
\end{figure}

We remark that we could loosen the requirement that $|A|\geq 2$ if we allowed trees whose distinguished vertex was not necessarily stable, but our intended application only requires this case, so we make this assumption for simplicity.

\subsection{The underlying spaces}\label{detsec}
In this article we consider Stirling trees up to structure preserving isomorphisms.  In particular, isomorphisms must preserve the leg labels, the distinguished vertex and the set of alternating flags.  Stirling complexes will be constructed as the sum over isomorphism classes of Stirling trees.

Each summand will in turn be spanned by a choice of additional sign data, which will be encoded as an order on the set of edges and the set of alternating flags, modulo even permutations.  To formalize this, given a finite set $X=\{x_1\cdc x_n\}$, we define $det(X)$ to be the top exterior power of the vector space spanned by $X$.  Explicitly, $det(X)$ is a $1$-dimensional vector space spanned by $x_1\wedge \dots \wedge x_n$ and subject to the rule that $x_1\wedge \dots \wedge x_n = sgn(\sigma) x_{\sigma(1)}\wedge \dots \wedge x_{\sigma(n)}$ for a permutation $\sigma \in S_X$.

We are now prepared to define the vector spaces underlying the Stirling complexes.
\begin{definition}  Define $\op{S}_{n,k,i}$ to be
	\begin{equation*}
	\op{S}_{n,k,i}=\bigoplus_{(\mathsf{t},v,A)} det(E(\mathsf{t}))\tensor det(A)
	\end{equation*}
with direct sum taken over all isomorphism classes of Stirling trees of type $(n,k)$ having $i$ edges.
\end{definition}

For example, the Stirling tree pictured in Figure $\ref{fig:st}$ specifies a one dimensional subspace of $\op{S}_{8,2,3}$.  The choice of an order on the set of edges and the set of alternating flags would further specify a non-zero element in this subspace.

\begin{lemma}\label{bounds}
	$\op{S}_{n,k,i}$ is non-zero if and only if $0\leq i \leq n-k$.
\end{lemma}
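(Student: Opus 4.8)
The plan is to reduce the statement to the existence of Stirling trees while keeping the determinant summands under control. Since each summand $det(E(\mathsf{t}))\tensor det(A)$ is one-dimensional, $\op{S}_{n,k,i}$ is nonzero as soon as some isomorphism class of Stirling tree of type $(n,k)$ with $i$ edges contributes a surviving copy; in particular, its nonvanishing forces the existence of at least one such tree, so $i\geq 0$ is automatic. For the upper bound I would count flags. A stable $n$-tree with $i$ edges has $i+1$ vertices, and since each edge carries two flags while each of the $n+1$ legs carries one, there are $2i+(n+1)$ flags in total. Exactly one flag at every vertex is an output, leaving $i+n$ input flags. Stability requires at least two inputs at every vertex, while the distinguished vertex must accommodate the $k$ alternating flags among its inputs; summing the input counts over the $i+1$ vertices yields $i+n\geq k+2i$, that is, $i\leq n-k$. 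This settles the ``only if'' direction.

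For the converse I would construct, for each $i$ with $0\leq i\leq n-k$, an explicit Stirling tree of type $(n,k)$ with $i$ edges. I would use a caterpillar: a path of vertices $w_0\cdc w_i$ directed toward $w_0$, with leg $0$ placed on $w_0$, one further non-zero leg placed on each of $w_0\cdc w_{i-1}$, and the remaining $n-i$ non-zero legs placed on $w_i$ (when $i=0$ this degenerates to the corolla carrying all $n+1$ legs). Declaring $w_i$ to be the distinguished vertex and letting $A$ be any $k$-element subset of its input legs, one checks that every vertex has at least two inputs and that $w_i$ has $n-i\geq k$ input legs precisely because $i\leq n-k$; hence the tree is stable and genuinely of type $(n,k)$ with $i$ edges.

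It then remains to ensure the corresponding summand $det(E(\mathsf{t}))\tensor det(A)$ is not annihilated by an orientation-reversing automorphism. Here I would observe that the caterpillar is rigid: since the leg labels are distinct and preserved by every structure-preserving isomorphism, and since each of its vertices carries at least one leg, any automorphism fixes all vertices, hence all edges and all flags. The automorphism group is therefore trivial, the one-dimensional space survives, and so $\op{S}_{n,k,i}\neq 0$. The flag count is routine; I expect the only genuine subtlety to be this last point, namely guaranteeing that no determinant cancellation occurs, which is exactly why I would choose a rigid representative rather than an arbitrary admissible tree.
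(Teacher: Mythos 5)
Your proof is correct and follows essentially the same route as the paper: a stability-based flag count gives the upper bound $i\leq n-k$ (the paper counts total flags, $2i+n+1\geq 3i+k+1$; you count input flags, $i+n\geq 2i+k$ — the same inequality), and existence is shown by a caterpillar construction (the paper builds only the maximal caterpillar with $n-k$ edges and contracts edges one at a time, whereas you build one for each $i$ directly). Your final paragraph on automorphisms is superfluous rather than wrong: $\op{S}_{n,k,i}$ is defined as a direct sum over isomorphism classes with no coinvariants taken, and in any case every leg-label-preserving automorphism of a stable $n$-tree is trivial, so no determinant cancellation could occur for any admissible representative.
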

\begin{proof}  Suppose $\op{S}_{n,k,i}\neq 0$.  Then clearly $i\geq 0$.  Such a tree has $2i+n+1$ flags, but by stability must also have at least $3(|V|-1) + k+1$ flags, where $|V|$ denotes the number of vertices of the tree.  Using the fact that $|V|-1=i$ we see $n\geq i+k$ as desired.  Conversely, if we construct any tree with $n-k$ edges (for example by arranging all the vertices and edges along a vertical line), we may contract the edges one at a time to show that each such $\op{S}_{n,k,i}$ is not zero.
\end{proof}

\subsection{The symmetric group action}

The vector space $\op{S}_{n,k,i}$ is naturally an $S_n$-module via the permutation action on leaf labels.  For example, $\op{S}_{n,n,0}$ is the alternating representation and $\op{S}_{n,n-1,0}$ is the tensor product of the alternating representation with the permutation representation, i.e.\ $V_{1^n}\oplus V_{2,1^{n-2}}$.  However, the description of $\op{S}_{n,k,i}$ given in terms of the Feynman transform (Theorem $\ref{isothm}$ below) makes it clear that this ``obvious'' $S_n$ action is the restriction of an $S_{n+1}$-module structure.  Although not needed for the homology calculation below, let us describe this action in this subsection.

    First, we let $S_{n+1}$ act on the set of isomorphism classes of $n$-tress by permutation of the leg labels, using the unique isomorphism $\{0\cdc n\}\cong \{1\cdc n+1\}$ which fixes $1\cdc n$ and exchanges $0$ and $n+1$.  Let $\sigma \in S_{n+1}$ and $\mathsf{t}$ be an $n$-tree.  When comparing $\mathsf{t}$ and $\sigma\mathsf{t}$,
observe that there is a natural bijective correspondence between their edges, their vertices, and the flags at each vertex, although which flags are inputs and outputs need not be preserved.  Given a vertex $v$ of $\mathsf{t}$, we use the notation $\sigma v$ to denoted the corresponding vertex of $\sigma \mathsf{t}$.  Similarly for edges and flags, as well as sets of flags.

Now suppose we have a Stirling tree $(\mathsf{t}, v, A)$ of type $(n,k)$ and $\sigma \in S_{n+1}$.  The tree $\sigma\mathsf{t}$ comes with a distinguished vertex $\sigma v$, as well as a distinguished subset of the flags adjacent to $\sigma v$, namely $\sigma A$.  Since the edges and alternating flags are naturally identified, there is a map
\begin{equation}\label{sigma}
det(E(\mathsf{t}))\tensor det(A) \to det(E(\sigma\mathsf{t}))\tensor det(\sigma A)
\end{equation}
which simply preserves the order of each wedge product.  Note however since $\sigma$ needn't fix $n+1$, there is no guarantee that each flag in $\sigma A$ is an input flag.

With this in mind, we define the action of $S_{n+1}$ on $\op{S}_{n,k,i}$ in two cases.  Fix $\sigma \in S_{n+1}$.  Given a Stirling tree $(\mathsf{t},v,A)$, if $\sigma A$ does not contain the output flag of $\sigma v$, then we define the action of $\sigma$ on the summand of $\op{S}_{n,k,i}$ indexed by $(\mathsf{t},v,A)$ to be given by the map in Equation $\ref{sigma}$.

If, on the other hand, the output flag of $\sigma v$, call it $z$, is contained in the set $\sigma A$, we proceed as follows.  Let $B$ be the set of flags adjacent to $\sigma v$ which are not contained in $\sigma A$.  Note that $B$ is not empty, since under these conditions the output of $v$ corresponds to an input of $\sigma v$ which is not in $\sigma A$, hence is in $B$. 

For each $b\in B$, let $(\sigma A)_b := (\sigma A \setminus z) \cup\{b\}$.  The permutation $\sigma$ induces a natural bijective correspondence between the elements of $A$ and the elements of $(\sigma A)_b$ given by the intermediary $\sigma A$, and exchanging $z$ with $b$.  Therefore, there is a canonical map
\begin{equation*}
\sigma_b\colon det(E(\mathsf{t}))\tensor det(A) \to det(E(\sigma\mathsf{t}))\tensor det((\sigma A)_b)
\end{equation*}
given by canonically identifying the edges (resp.\ flags) on the left bijectively with those on the right.  In this case we define the action of $\sigma$ on the summand of $\op{S}_{n,k,i}$ indexed by $(\mathsf{t},v,A)$ to be given by the map
\begin{equation}\label{action}
\sum_{b\in B} -\sigma_b.
\end{equation}

We remark that in light of Theorem $\ref{isothm}$ below, this rule is merely a translation of \cite[Lemma 3.2]{WardWheels}, which in this terminology tells us how to rewrite a term containing a distinguished output flag as a linear combination of those having only distinguished input flags.

\subsection{The differential}\label{diffsec}

We are now prepared to define the chain complex $\op{S}_{n,k}$.  It will have the following form,
$$
0\to \op{S}_{n,k,n-k} \to \dots \to\op{S}_{n,k,i+1} \to \op{S}_{n,k,i} \to \dots \to \op{S}_{n,k,0} \to 0.
$$
By convention $\op{S}_{n,k,i}$ is considered to be of total degree $i+k$, so that $\op{S}_{n,k}$ is concentrated between degrees $k$ and $n$.  

The differential $d$ will be defined as a sum over edge contractions, as in the case of the bar construction of an operad \cite{GK}.  Specifically, this means $d$ applied to the summand of $\op{S}_{n,k,i}$ corresponding to a Stirling tree $(\mathsf{t},v,A)$ will be of the form  $d=\sum_{e\in \mathsf{t}} d_e$, and we proceed to define $d_e$.

Fix such a Stirling tree $(\mathsf{t},v,A)$ and choose an edge $e$ of $\mathsf{t}$.  We may contract the edge to produce a new tree $\mathsf{t}/e$.  If this edge does not contain an alternating flag, then the edge collapse specifies a Stirling tree whose distinguished vertex and alternating flags are canonically identified with those of the source.  Thus we have a map
\begin{equation}\label{edge}
det(E(\mathsf{t}))\tensor det(A_\mathsf{t}) \stackrel{d_e}\longrightarrow det(E(\mathsf{t}/e))\tensor det(A_\mathsf{t/e})
\end{equation}
which by convention removes the edge $e$ in the last position of the wedge product of the edges of $\mathsf{t}$.

Now suppose, on the other hand, that the edge $e=\{a,b_0\}$ consists of input flag $a\in A$ and output flag $b_0$.  When $e$ is contracted, the alternating flag $a$ is lost and we account for this by summing over the newly adjacent flags as replacements, see Figure $\ref{fig:std}$.  To make this precise, define $B$ to be the set of input flags at the vertex whose output is $b_0$, and define $A_{b}:= (A\setminus a) \cup \{b\}$, for each $b\in B$.  Observe that the elements of $A$ and $A_b$ are in canonical bijective correspondence, with bijection exchanging $a$ and $b$ and preserving all other elements.  This in turn induces an isomorphism $det(A)\to det(A_b)$.  We then define 
\begin{equation}\label{edgeb}
d_{e,b}\colon det(E(\mathsf{t}))\tensor det(A) \to det(E(\mathsf{t}/e))\tensor det(A_b)
\end{equation}
by taking this isomorphism on the right hand tensor factor and, as above, taking the map which removes $e$ in the last position of the wedge product on the left hand tensor factor.  Finally, we define $d_e := \sum_{b\in B} d_{e,b}$ in this case.

\begin{figure}
	\centering
	\includegraphics[scale=.85]{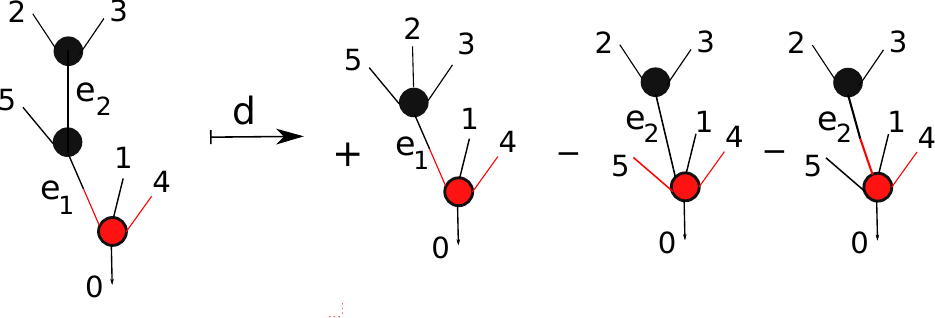}
	\caption{  The Stirling tree $\mathsf{t}$ on the left specifies an element of $\op{S}_{5,2,2}$, taking $e_1\wedge e_2 \in det(E(\mathsf{t}))$ and taking the alternating flags in the order indicated by the choice of planar embedding.  The differential applied to this element is indicated on the right.   The first differential term comes from contracting the edge which does not contain an alternating flag, the latter two come from contracting the edge which does contain an alternating flag.  One may verify that $d^2=0$ in this case by contracting the remaining edge in each of the three terms on the right.  The result is three pairs of identical terms appearing with opposite sign. }
	\label{fig:std}
\end{figure}

\begin{lemma} $d^2=0$.
\end{lemma}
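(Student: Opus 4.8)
The plan is to prove $d^2 = 0$ by analyzing the terms of $d^2$ according to which pair of edges is contracted, and showing that the terms cancel in pairs. Since $d = \sum_{e} d_e$, we have $d^2 = \sum_{e, e'} d_{e'} d_e$, where the outer contraction $d_{e'}$ is applied in the tree $\mathsf{t}/e$. The key observation is that the order of contracting two distinct edges should not matter up to sign, so each unordered pair $\{e, e'\}$ of edges contributes two terms to $d^2$, and I will show these cancel. First I would handle the sign bookkeeping on the $det(E(\mathsf{t}))$ tensor factor: each $d_e$ removes its edge in the last position of the wedge product, so contracting $e$ then $e'$ versus $e'$ then $e$ introduces a relative sign of $-1$ from reordering $\dots \wedge e \wedge e'$ against $\dots \wedge e' \wedge e$. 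This is the standard Koszul-sign cancellation familiar from the bar construction of an operad, so I would cite the analogy to \cite{GK} for the baseline case.

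The main content beyond the standard argument lies in the alternating-flag decorations, so I would organize the proof into cases based on how the contracted edges $e, e'$ interact with the alternating set $A$. In the case where neither $e$ nor $e'$ contains an alternating flag, the decorations on $det(A)$ are carried along canonically and commute with the edge reordering, so cancellation follows exactly as in the operadic bar construction. In the case where exactly one of the two edges, say $e = \{a, b_0\}$ with $a \in A$, contains an alternating flag, I would need to check that the summation $d_e = \sum_{b \in B} d_{e,b}$ over replacement flags commutes appropriately with the contraction $d_{e'}$ of the non-alternating edge $e'$; here the set $B$ of replacement flags and the tree $\mathsf{t}/e'$ are essentially independent of each other (contracting $e'$ does not alter the local structure at the vertex producing $B$, unless $e'$ is itself incident to that vertex), so the two operations commute and the Koszul sign again produces cancellation.

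The hard part will be the case where the two edges are adjacent and interact nontrivially through the alternating flags — in particular when $e = \{a, b_0\}$ has $a \in A$ and the output $b_0$ sits at a vertex $w$ one of whose input flags is itself the lower flag of $e'$, so that the replacement set $B$ for $e$ and the edge $e'$ compete for the same local geometry. Contracting $e$ first replaces $a$ by some $b \in B$, and this $b$ may itself be (a flag of) $e'$, in which case the subsequent contraction of $e'$ again triggers a replacement sum; whereas contracting $e'$ first changes the set $B$ available when one later contracts $e$. I would resolve this by carefully matching, term by term, the composite $d_{e',b'} \circ d_{e,b}$ against the reverse composite $d_{e,\tilde b} \circ d_{e',b''}$, verifying that after contracting both edges the resulting Stirling trees, replacement flags, and identifications of $A$ with the final alternating set coincide, while the Koszul sign on the $det(E)$ factor supplies the relative $-1$. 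The worked example in Figure $\ref{fig:std}$, where $d^2$ produces ``three pairs of identical terms appearing with opposite sign,'' is exactly an instance of this matching, and I would use it to guide the general pairing and then argue that every term of $d^2$ appears in precisely one such cancelling pair, yielding $d^2 = 0$.
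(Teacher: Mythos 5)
Your strategy is essentially the paper's own proof: reduce $d^2=0$ to the identity $d_{e_1}\circ d_{e_2} = -d_{e_2}\circ d_{e_1}$ for each pair of distinct edges, extract the sign $-1$ from transposing the two edges in the $det(E(\mathsf{t}))$ factor, and split into cases according to how the two edges meet the alternating set $A$. Your three cases are the paper's Case I (neither edge alternating), Case III (one alternating edge, no interaction), and Case IV (the hard case), and your reading of the hard case is exactly right: contracting the plain edge $e'$ first enlarges the replacement set to $(B_1\setminus b_0)\cup B_2$, while contracting the alternating edge first and choosing the flag $b_0$ of $e'$ as the replacement makes $e'$ alternating and triggers a second replacement sum; these two expansions match term by term with the Koszul sign, which is precisely the content of Equations $\ref{IV1}$ and $\ref{IV2}$ in the paper.

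The one genuine omission is that your case enumeration (``neither'', ``exactly one'') is not exhaustive: both edges can contain alternating flags, which is the paper's Case II. This occurs whenever two alternating flags of the distinguished vertex each belong to an edge. The case requires no idea beyond what you already have: an edge contains at most one alternating flag, and two such edges end at distinct child vertices, so the replacement sets $B_1$ and $B_2$ are disjoint and unaffected by the other contraction. Hence $d_{e_1}\circ d_{e_2}=\sum_{(b_1,b_2)\in B_1\times B_2} d_{e_1,b_1}\circ d_{e_2,b_2}$ cancels against the reverse composite term by term, with the sign arising only on the $det(E(\mathsf{t}))$ factor, since the wedge of alternating flags in the final tree does not depend on the order of contraction. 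Adding this case makes your outline complete and fully aligned with the paper's argument.
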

\begin{proof}

	Start with a Stirling tree $(\mathsf{t},v,A)$.  We will show that $d^2=0$ when applied to the summand of $\op{S}_{n,k,i}$ indexed by this Stirling tree, from which the claim follows.   Note that if $\mathsf{t}$ has fewer than two edges, then $d^2=0$ just by degree considerations.  Else, choose two edges $e_1,e_2$ of $\mathsf{t}$, and we will argue that
$$d_{e_1}\circ d_{e_2}= -d_{e_2}\circ d_{e_1}$$
from which the statement follows.  We divide this verification into four cases.

{\bf Case I: } Suppose that neither $e_1$ nor $e_2$ contains an alternating flag.  In this case, we apply Equation $\ref{edge}$ twice and 
$d_{e_1}\circ d_{e_2}$ and $d_{e_2}\circ d_{e_1}$ are related by applying the transposition swapping $e_1$ and $e_2$, producing a factor of $-1$ on the $det(E(\mathsf{t}))$ tensor factor, while the alternating flags are preserved at each stage.

For the remaining cases, for $i\in \{1,2\}$, define $v_i$ to be the vertex adjacent and above $e_i$ and define $B_i$ to be the set of input flags adjacent to $v_i$.  

{\bf Case II: } Suppose that both $e_1$ and $e_2$ contain an alternating flag.  In this case, twice applying Equation $\ref{edgeb}$ we have
$$d_{e_1}\circ d_{e_2} = \ds\sum_{(b_1,b_2)\in B_1\times B_2} d_{e_1,b_1}\circ d_{e_2,b_2}$$
and similarly in the opposite order.  Here it suffices to observe that for each pair $(b_1,b_2)$ we have $d_{e_1,b_1}\circ d_{e_2,b_2} = -d_{e_2,b_2}\circ d_{e_1,b_1}$.  This follow as in the previous case, with the $-1$ arising from the $det(E(\mathsf{t}))$ tensor factor from the permutation of the two edges.  Since the wedge product of the alternating flags does not depend on the order of the edge contractions, no sign arises on the $det(A)$ factor. 

{\bf Case III: } Suppose (without loss of generality) that $e_1$ contains an alternating flag and $e_2$ does not, and suppose that $e_2$ is not adjacent to $v_1$.  In this case, applying Equation $\ref{edgeb}$ for $d_{e_1}$ and applying Equation $\ref{edge}$ for $d_{e_2}$ we have:
$$d_{e_1}\circ d_{e_2} = \ds\sum_{b\in B_1} d_{e_1,b}\circ d_{e_2}$$
and similarly in the opposite order.  It then suffices to verify $d_{e_1,b_1}\circ d_{e_2} = -d_{e_2}\circ d_{e_1,b_1}$, with the sign coming from the permutation of the two edges, as in the previous cases.

{\bf Case IV: }  We are thus left with the most interesting case, namely $e_1$ contains an alternating flag and $e_2$ is adjacent to $v_1$.  This case is depicted in Figure $\ref{fig:std}$.  Let $b_0 \in B_1$ be the input flag of the edge $e_2$.  If we contract $e_2$ first, the flags adjacent to $v_1$ are altered since $b_0$ is removed, but the flags in $B_2$ become adjacent to $v_1$.  Hence:
\begin{equation}\label{IV1}
d_{e_1}\circ d_{e_2} = \ds\sum_{b\in (B_1\setminus b_0)\cup B_2} d_{e_1,b}\circ d_{e_2}
\end{equation}
On the other hand, if we contract $e_1$ first we may single out the term $d_{e_1,b_0}$.  Since application of this map results in $b_0$ being an alternating flag, we must apply Equation $\ref{edgeb}$ when composing with $d_{e_2}$.  For each other $b_1\in B_1$, we apply Equation $\ref{edge}$.  Hence:
\begin{equation}\label{IV2}
d_{e_2}\circ d_{e_1} = \ds\sum_{b_1\in (B_1\setminus b_0)} d_{e_2}\circ d_{e_1,b_1} + \ds\sum_{b_2\in B_2} d_{e_2,b_2}\circ d_{e_1,b_0}
\end{equation}
To conclude, it suffices to observe that the terms in Equation $\ref{IV1}$ and $\ref{IV2}$ are negatives of each other.  This follows from the fact that for $b\in (B_1\setminus b_0) \cup B_2$, we have:
$$
-d_{e_1,b}\circ d_{e_2} = \begin{cases}
d_{e_2}\circ d_{e_1,b} & \text{ if } b\in B_1\setminus b_0 \\ 
d_{e_2,b}\circ d_{e_1,b_0} & \text{ if } b\in B_2
\end{cases}
$$
with the $-1$ factor again arising on the $det(E(\mathsf{t}))$ tensor factor by transposition of the edges.
\end{proof}

 The differential on each $\op{S}_{n,k}$ is compatible with the $S_{n+1}$-action defined in the previous subsection.  This follow from the identification of $\op{S}_{n,k}$ with a subcomplex of the Feynman transform, see Theorem $\ref{isothm}$, but it can also be verified directly from the above descriptions as we now indicate.
 
 \begin{proposition}  For each $\sigma\in S_{n+1}$, we have $\sigma d=d\sigma$.
 \end{proposition}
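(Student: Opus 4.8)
The plan is to verify $\sigma d = d\sigma$ directly from the definitions of the two preceding subsections. Since the $S_{n+1}$-action is a genuine group action, the set $\{\sigma \in S_{n+1} : \sigma d = d\sigma\}$ is a subgroup, and it therefore suffices to check the identity on a generating set. I would take as generators the subgroup $S_n \subset S_{n+1}$ fixing the label $0$, together with a single transposition $\tau$ interchanging the leg labeled $0$ with the leg labeled $n$; these generate $S_{n+1}$ because $S_n$ is the stabilizer of $0$ and, together with $\tau$, acts transitively on $\{0\cdc n\}$, forcing the generated subgroup to have order $(n+1)!$.

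First I would dispose of the case $\sigma \in S_n$. Such a $\sigma$ fixes leg $0$, hence fixes the vertex carrying leg $0$ and preserves the orientation (toward leg $0$) of every edge; consequently the partition of flags into inputs and outputs is preserved at every vertex. In particular $\sigma A$ remains a set of input flags at $\sigma v$, so $\sigma$ always acts by the simple rule of Equation $\ref{sigma}$. Because edge contraction is natural with respect to relabeling, $(\sigma\mathsf{t})/(\sigma e) = \sigma(\mathsf{t}/e)$ as oriented trees, and the property of $e$ carrying an alternating flag is preserved by $\sigma$, so that $\sigma d$ and $d\sigma$ may be matched term by term over the edges. What remains is to check that the induced maps on $det(E(\mathsf{t}))\tensor det(A)$ agree, which is immediate: both remove $e$ from the last slot of the edge-wedge and permute the remaining generators identically.

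The real content is the transposition $\tau$. It reverses the orientation of exactly the edges on the path between the vertex carrying leg $0$ and the vertex carrying leg $n$, and so changes the output flag precisely at the vertices on this path. The complicated case of the action, Equation $\ref{action}$, occurs exactly when $\tau$ carries an element of $A$ into the output position of $\tau v$, which forces $v$ to lie on this path; the sum $\sum_b -\tau_b$ is then the local rewriting that re-expresses the resulting configuration as a combination of honest Stirling trees. I would organize the check of $\tau d = d\tau$ according to whether $v$ lies on the reorientation path and whether the contracted edge $e$ is incident to $v$ and/or carries an alternating flag, in the same spirit as the four-case analysis in the proof that $d^2=0$.

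The main obstacle is the case where the two summation mechanisms collide: the edge-contraction sum $d_e = \sum_{b\in B} d_{e,b}$ of Equation $\ref{edgeb}$ and the reorientation sum $\sum_b -\tau_b$ of Equation $\ref{action}$ interact precisely when the contracted edge is incident to the distinguished vertex along the reoriented path — the analogue of Case IV above. As there, I expect the resolution to be a bijective pairing of the terms of $d\tau$ with those of $\tau d$ in which paired terms differ by the transposition of the relevant edge in the $det(E(\mathsf{t}))$ factor, the sign of this transposition cancelling against the sign in Equation $\ref{action}$; the compatibility of exchanging an output flag for an input flag is exactly the content of \cite[Lemma 3.2]{WardWheels}, invoked in the construction of the action. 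Finally, I would remark that the whole proposition also follows without computation from the identification of $\op{S}_{n,k}$ with an $S_{n+1}$-equivariant subcomplex of the Feynman transform in Theorem $\ref{isothm}$.
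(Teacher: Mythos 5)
Your overall skeleton coincides with the paper's: reduce to generators (the stabilizer $S_n$ of $0$, where the action is the plain relabeling of Equation \ref{sigma} and visibly commutes with every $d_e$, plus a transposition moving $0$), then analyze the transposition by cases according to the position of the contracted edge relative to the distinguished vertex and the path toward the swapped leg. The paper checks all transpositions $(0\ i)$ rather than a single one, but that difference is immaterial. The gap is in the one case that carries all the content --- your ``collision'' case --- where you have not done the computation, and the mechanism you predict for it is wrong.

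In that case the edge is $e=\{a,y\}$ with $a\in A$ the input flag at $v$ pointing toward the leg exchanged with $0$. Both $\sigma d_e$ and $d_e\sigma$ contract the \emph{single} edge $e$; no term on either side involves two edge contractions, so no transposition of edges in $det(E(\mathsf{t}))$ ever occurs, and there is no such sign available to cancel against the $-1$ of Equation \ref{action}. (The pairing-by-edge-transposition mechanism you invoke is the one from the $d^2=0$ proof, Case IV; it cannot arise in an equivariance check.) What actually happens is different: writing $w$ for the vertex below $e$, $B$ for its inputs, $b_0\in B$ for the flag toward the swapped leg, and $C$ for the flags at $v$, one has $d_e\sigma=-\sum_{c\in C\setminus a}d_e\sigma_c$, while $\sigma d_e=\sigma d_{e,b_0}+\sum_{b\in B\setminus b_0}\sigma d_{e,b}$. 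The image of $d_{e,b_0}$ acquires a distinguished \emph{output} flag after applying $\sigma$, so Equation \ref{action} rewrites $\sigma d_{e,b_0}$ as $-\sum_c \sigma_c d_{e,b_0}$ with $c$ ranging over $(B\setminus b_0)\cup(C\setminus a)$. The $B\setminus b_0$ portion of this expansion cancels the remaining differential terms $\sum_{b\in B\setminus b_0}\sigma d_{e,b}$ --- an \emph{internal} cancellation within $\sigma d_e$, between the action sum and the edge-contraction sum --- and the surviving $C\setminus a$ portion matches $d_e\sigma$ term by term \emph{with the same sign}, using $\sigma_c d_{e,b_0}=d_e\sigma_c$. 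So the minus signs of Equation \ref{action} appear on both sides and agree; nothing cancels against them. Until this computation (and its three variants, corresponding to the other relative positions of $a$, $x$, $y$) is carried out, the proof is incomplete, since this is precisely the case where the two summation rules interact. Your closing remark --- that equivariance also follows from the identification with the Feynman transform in Theorem \ref{isothm} --- is indeed the alternative route the paper itself points to before giving the direct verification.
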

\begin{proof}

		Start with a Stirling tree $(\mathsf{t},v,A)$.  We will show that $\sigma d=d\sigma$ when applied to the summand of $\op{S}_{n,k,i}$ indexed by this Stirling tree, from which the claim follows.  If $\mathsf{t}$ has no edges the claim is vacuous, so assume $\mathsf{t}$ has at least one edge.  Since the edges of $\mathsf{t}$ and $\sigma\mathsf{t}$ coincide (i.e.\ are in canonical bijective correspondence) it suffices to show that $\sigma d_e = d_e\sigma$ for each edge $e$ of $\mathsf{t}$.
		
		Let us first observe that if $\sigma \in S_n$, i.e.\ if $\sigma$ fixes $0$, then the claim is immediate, since the permutation simply relabels the input legs of the tree.  Which flags are alternating does not change upon such a permutation, only their labels change, and this operation clearly commutes with edge contraction.  So it suffices to restrict attention to the case that $\sigma = (0 \ i)$ for some $1\leq i \leq n$ from which the general case follows.
		
		Fix $\sigma = (0 \ i)$ for such an $i$ and fix an edge $e$.  Let $a$ be the input flag adjacent to $v$ which is on the unique shortest path from the leg labeled by $i$ to $v$.  Let $e=\{x,y\}$ where $x$ is an input flag and $y$ is an output flag.  To show $\sigma d_e = d_e\sigma$, one separates the verification into several cases based on the possibilities for $a,x$ and $y$.  These cases are as follows:
		
\begin{enumerate}
	\item $a=x$ and $a\in A$,
	\item $a\in A$ and $y$ is adjacent to $v$,
	\item $a\in A$, $x$ is adjacent to $v$ and $x\not\in A$,
	\item $a,x\in A$, and $x\neq a$,
	\item none of the above, i.e. $a\not\in A$ or $e$ is not adjacent to $v$.
\end{enumerate}

Case 5 is the simplest, since the terms coincide on the nose with no cancellation.  The first four cases are very similar to each other, so we carefully work through Case 1 and leave the verification of the others to the reader.  %

For this, assume that $a=x$ and $a\in A$.  In this case the edge $e$ is adjacent to $v$.  Let $w$ be the vertex whose output is $y$ and let $B$ be the set of inputs of $w$.  The unique shortest path connecting $v$ to the leg labeled by $i$ necessarily contains one input of $w$, call it $b_0\in B$.  Finally, define $C$ to be the set of flags adjacent to $v$.  

By Equation $\ref{action}$ we have:
\begin{equation}\label{oneway}
d_e \sigma = d_e\circ(-\sum_{C\setminus a} \sigma_c) = - \sum_{C\setminus a}d_e\circ  \sigma_c
\end{equation}
On the other hand,
$$\sigma d_e=\sum_{b\in B} \sigma d_{e,b} = \sigma d_{e,b_0} + \sum_{b\neq b_0} \sigma d_{e,b} $$
Since the image of $\sigma d_{e,b_0}$ has a distinguished output, we apply Equation $\ref{action}$ to it to rewrite this term as $\sigma d_{e,{b_0}} = \sum -\sigma_c d_{e,{b_0}}$,
where here $c$ ranges over the input flags of the vertex of $\sigma \mathsf{t}$ formed by contracting $e$.  This set is (canonically identified with) $(B\setminus b_0) \cup (C \setminus a)$.  Therefore
$$\sigma d_e = \left(\sum_{c\in B\setminus b_0} -\sigma_c d_{e,{b_0}} + \sum_{c\in C\setminus a} -\sigma_c d_{e,{b_0}} \right) + \sum_{b\in B\setminus b_0} \sigma d_{e,b} $$
The first and third sums cancel.  Finally, observe that for $c\in C\setminus a$ we have $\sigma_c d_{e,b_0} = d_e\sigma_c$, since both simply contract the edge $e$ and replace $a$ with $c$ as the new alternating flag.  Hence the middle sum in the previous equation is exactly $d_e\sigma$ by Equation $\ref{oneway}$, as desired.
\end{proof}

\section{Homology of Stirling Complexes}\label{homologysec}


\begin{theorem}\label{main}  The Betti numbers of the complex $\op{S}_{n,k}$ are: $$
	\beta_i(\op{S}_{n,k}) = \begin{cases} |s_{n,k}| & \text{ if } i=n \\ 0 & \text{ else}  \end{cases}. $$
\end{theorem}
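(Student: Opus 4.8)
The plan is to prove that $\op{S}_{n,k}$ is acyclic in every total degree below $n$, so that its homology is concentrated in the top degree $i=n$ (the leftmost term $\op{S}_{n,k,n-k}$). The rank $|s_{n,k}|$ then drops out of the Euler characteristic $\chi(\op{S}_{n,k})=s_{n,k}$ recorded at the start of Section 3, since a complex whose homology lives in a single degree has that Betti number equal to $|\chi|$. Thus the entire content is the vanishing $H_i(\op{S}_{n,k})=0$ for $i<n$.

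To organize the cancellation I would filter $\op{S}_{n,k}$ by the set $D\subseteq\{1\cdc n\}$ of leg labels lying in the $k$ branches emanating from the alternating flags of the distinguished vertex $v$ (equivalently, the legs separated from leg $0$ by an alternating flag). A non-alternating contraction (Equation \ref{edge}) preserves $D$, while an alternating contraction (Equation \ref{edgeb}) strictly shrinks $D$: collapsing $e=\{a,b_0\}$ replaces the branch above $a$ by the single branch above the chosen input $b$, expelling the legs of the remaining sibling branches (at least one, by stability) into the complementary region. Hence $F_p:=\mathrm{span}\{\,\mathsf{t} : |D(\mathsf{t})|\le p\,\}$ is a subcomplex filtration, and the associated graded differential retains only the $D$-preserving contractions.

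On the associated graded, with $D$ and its partition $D=D_1\sqcup\dots\sqcup D_k$ into the $k$ alternating branches fixed, the $D$-preserving contractions decouple: those internal to a branch only rearrange that branch, and those in the complementary region only rearrange it, with the $k$ attachment points at $v$ held fixed. I would therefore identify the associated graded summand, via Künneth, with a tensor product of (i) for each block, the rooted-tree edge-contraction complex on leaf set $D_j$, which is the bar complex of $\mathsf{Comm}$ and so has homology $\mathrm{Lie}(D_j)$ concentrated in top degree, of rank $(|D_j|-1)!$; and (ii) the complementary complex of rooted trees on $\{0\}\cup(\{1\cdc n\}\setminus D)$ carrying the distinguished vertex with its $k$ inert branch-slots. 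The key point, categorifying the final statement of Lemma \ref{basics}, is that this complementary complex is acyclic whenever it has at least two legs, i.e.\ whenever $D\neq\{1\cdc n\}$, and is one-dimensional when $D=\{1\cdc n\}$ (the lone tree $v$ with output leg $0$ and $k$ slots). By Künneth every summand with $D\subsetneq\{1\cdc n\}$ is killed, while for $D=\{1\cdc n\}$ the surviving homology is $\bigotimes_j\mathrm{Lie}(D_j)$ in edge-degree $n-k$; summing over partitions of $\{1\cdc n\}$ into $k$ blocks gives total rank $\sum\prod_j(|D_j|-1)!=|s_{n,k}|$, all in total degree $n$. Since the $E_1$ page sits in a single total degree and the spectral sequence differentials lower total degree, it degenerates, yielding $H_\ast(\op{S}_{n,k})$ as claimed.

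The main obstacle is the acyclicity of the complementary complex for at least two legs. This is a bar-construction statement for $\mathsf{Comm}$, complicated by the marked distinguished vertex with its inert slots (exactly the infinitesimal-bimodule feature alluded to in the introduction), so the standard $\mathsf{Comm}$-contraction must be adapted to keep that vertex marked. I would establish it by an explicit contracting homotopy that grafts and prunes the largest-labeled leg nearest the root, checking that the marked vertex and the $det(E(\mathsf{t}))\tensor det(A)$ sign conventions behave correctly under $dh+hd$; absent a clean closed form, an auxiliary filtration reducing to the classical acyclicity of the reduced bar complex of $\mathsf{Comm}$ would suffice. The remaining bookkeeping — that the surviving $D=\{1\cdc n\}$ classes all sit in edge-degree $n-k$, and that the signs in the Künneth identification are consistent — is routine but must be handled with care.
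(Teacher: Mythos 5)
Your reduction is organized genuinely differently from the paper's proof, and most of it is sound: the filtration by the leg set $D$ is legitimate (alternating contractions strictly shrink $D$ by stability of the vertex above the contracted edge, while all other contractions preserve $D$ and its partition into blocks), the K\"unneth splitting of the associated graded is correct, and your treatment of the top piece $D=\{1\cdc n\}$ --- splitting over partitions, Koszulity of $\mathsf{Com}$, $\dim\mathsf{Lie}(m)=(m-1)!$, total rank $|s_{n,k}|$ concentrated in degree $n$ --- is exactly the paper's treatment of its quotient complex. The gap is the step you yourself flag as the main obstacle: acyclicity of the complementary complex. This is not a side issue. The span of all trees with $D\subsetneq\{1\cdc n\}$ is precisely the subcomplex $\op{A}$ the paper introduces (trees whose distinguished vertex is not the root or has a non-alternating input), and its acyclicity is the heart of the paper's proof, established there by a second, nontrivial filtration (the ``reach'') whose associated graded splits into two-term isomorphisms. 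So the statement you defer carries all of the difficulty, and you have not proved it.

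Neither of your two proposed repairs works as written. The appeal to ``the classical acyclicity of the reduced bar complex of $\mathsf{Com}$'' rests on a false premise: that complex is not acyclic --- by Koszulity its homology is $\mathsf{Lie}$ concentrated in top degree, which is exactly what produces your block factors --- so there is nothing classical to reduce to; the vanishing you need is special to the marked vertex with its inert slots. Your other idea, an explicit contracting homotopy, does succeed, but not in the form you describe (pruning ``the largest-labeled leg nearest the root''; leg labels are irrelevant here). The homotopy that works acts at the marked vertex: let $h$ detach the $k$ inert slots onto a new vertex inserted immediately above the marked vertex, with the new edge placed last in $det(E(\mathsf{t}))$, and set $h=0$ when the marked vertex carries only its slots. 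Because the associated graded has already killed all alternating contractions --- this is why your $D$-filtration, or the paper's reach filtration, must come first; $h$ fails on $\op{S}_{n,k}$ itself --- one checks $dh+hd=\mathrm{id}$ on each complementary factor: on a tree whose marked vertex has a non-slot input, the identity term arises from contracting the edge $h$ creates and the remaining terms cancel in pairs by the edge-transposition sign; on a tree whose marked vertex has only its slots, the identity term arises from applying $h$ to the contraction of the edge below the marked vertex. With that lemma supplied, your argument becomes complete, and it is an attractive alternative organization: it isolates the infinitesimal-bimodule vanishing as a standalone statement and confines the sign bookkeeping to a single K\"unneth identification.
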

\begin{proof}  Fix $n,k$.  If $n=k$ the statement is trivial, so assume $n>k$.
	
	Define $\op{A}$	to be the span of those trees such that one or both of the following properties hold:
	\begin{enumerate}
		\item[(i)] the distinguished vertex has valence $> k+1$ (so not all inputs are alternating)
		\item[(ii)] the distinguished vertex is not the root vertex.
	\end{enumerate}
	Observe that $\op{A}\subset \op{S}_{n,k}$ is a subcomplex.  Indeed if a tree satisfies property (i) then differential terms arising from it will also satisfy property (i), while if a tree satisfies property (ii) then differential terms arising from it will satisfy property (i) when edges adjacent to the distinguished vertex are contracted, and will satisfy property (ii) when edges not adjacent to the distinguished vertex are contracted.  The heart of the proof is the following claim.
	
	{\bf Claim:} $\op{A}$ has no homology.
	
	To prove this claim, we will filter the complex $\op{A}$.  For a Stirling tree $\mathsf{t}\in \op{A}$ there is a unique shortest path connecting the distinguished vertex and the root vertex.  We'll refer to this path as the ``0-to-DV path''.  Let $P(\mathsf{t})\subset E(\mathsf{t})$ be the set of edges on this path.  Let $p(\mathsf{t})$ and $e(\mathsf{t})$ be the number of (path) edges, taking values $0\leq p(\mathsf{t}) \leq e(\mathsf{t})$.
	
	Define the reach of a tree to be $r(\mathsf{t}) = 2e(\mathsf{t})-p(\mathsf{t})- \nu(\mathsf{t})$, where $\nu(\mathsf{t})$ is the ``vcd check'' defined by
	\begin{equation*}
	\nu(\mathsf{t}):= \begin{cases} 1 & \text{ if the distinguished vertex has valence equal to $k+1$,} \\
							0 & \text{ if the distinguished vertex has valence not equal to $k+1$.} 
	\end{cases}
	\end{equation*}
We adopt the terminology ``vcd vertex'' to refer to the distinguished vertex of a Stirling tree $\mathsf{t}$ for which $\nu(\mathsf{t})=1$.
	
Since we've assumed $n>k$, the reach takes values in $0 \leq  r(\mathsf{t}) \leq 2(n-k)-2$.  It achieves the upper bound if $e(\mathsf{t})= n-k-1$, $\nu(\mathsf{t})=0$ and $P(\mathsf{t})$ is empty (i.e.\ the root vertex and distinguished vertex coincide) 
or if $e(\mathsf{t})=n-k$ and $p(\mathsf{t})=1$.  Note that if $e(\mathsf{t})=n-k$ then $\nu(\mathsf{t})=1$, since this is the maximum possible number of edges (by Lemma $\ref{bounds}$).
	
	Reach defines a filtration because contracting an edge which is not adjacent to a vcd vertex lowers the reach by at least one, while contracting an edge which is adjacent to a vcd vertex lowers the reach by $1$ if said edge is not on the 0-to-DV path and it preserves the filtration degree if the edge is on the $0$-to-DV path.  Thus in all cases the reach is not increased by the differential and so we have an increasing filtration
	$$
	\op{A}_0\subset \op{A}_1\subset... \subset\op{A}_r\subset\op{A}_{r+1} \subset ... \subset \op{A}_{2n-2k-2}$$
	where $\op{A}_r$ denotes the span of those Stirling trees in $\op{A}$ of reach $\leq r$.
	
	Consider the associated graded.  Each $\op{A}_{r+1}/\op{A}_r$ may be described as the span of trees of reach equal to $r$, with differential given by projection of the original differential to the quotient.  By the above analysis, all differential terms vanish in this quotient, except in the case where we have a vcd vertex and we contract the unique edge of maximum height on the 0-to-DV path (i.e.\ the edge immediately below the distinguished vertex).  Let us call this differential $\delta_0$.

\begin{figure}
	\centering
	\includegraphics[scale=1.15]{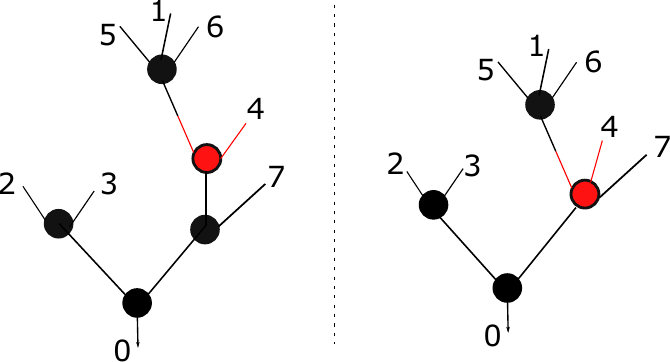}
	\caption{The Stirling tree of type $(7,2)$ on the left corresponds to a $1$-dimensional subspace of $\op{A}$ having reach $r = 8-2-1 = 5$.  In the associated graded $\op{A}_5/\op{A}_4$, its unique non-zero differential term lands in the summand corresponding the Stirling tree on the right, which we confirm has reach $r=6-1-0=5$.}
	\label{fig:sta}
\end{figure}
	
We therefore conclude that each Stirling tree of type $(n,k)$ satisfying the properties (i) and (ii) above corresponds to a $1$-dimensional subspace of the complex $\op{A}$ which is either the source or the target of exactly one non-trivial differential $\delta_0$ in the associated graded.  Indeed, each quotient complex $\op{A}_{r+1}/\op{A}_r$ splits as a direct sum of complexes of the form 
	\begin{equation}\label{edge2}
0\to det(E(\mathsf{t}))\tensor det(A_\mathsf{t}) \stackrel{\delta_0}\longrightarrow det(E(\mathsf{t}/e))\tensor det(A_\mathsf{t/e}) \to 0
	\end{equation}
where $\mathsf{t}$ is a Stirling tree of type $(n,k)$ and reach $r$ whose distinguished vertex is a vcd vertex and where $e$ is the unique edge on the $0$-to-DV path of maximum height.  See Figure $\ref{fig:sta}$.  Since the differential in Equation $\ref{edge2}$ is non-zero, it must be an isomorphism.  Thus the associated graded has no homology, and so neither does $\op{A}$.

Having shown that $\op{A}$ has no homology, to compute the homology of $\op{S}_{n,k}$, it suffices to compute the homology of the quotient $\op{S}_{n,k}/\op{A}$.  
The quotient complex is spanned by those Stirling trees which satisfied neither condition (i) nor condition (ii) above.  This means such a tree must have the root vertex equal to the distinguished vertex and be of vcd type. 
In the quotient, the non-zero differential terms can only contract edges that are not adjacent to the distinguished vertex.  To each such tree, associate a
 partition of the set $\{1\cdc n\}$ by declaring two numbers to belong to the same block if and only if the leaves they label share the same nearest alternating flag.  By definition, the nearest alternating flag is the input flag adjacent to the root vertex that we first encounter when we move along the unique shortest path connecting a leaf to the root.  In particular, the blocks in the partition must be non-empty, and the number of blocks is $k$.  

Such a partition is unchanged by the differential in the quotient $\op{S}_{n,k}/\op{A}$, and so this complex splits over the choice of such partitions.    Therefore to compute the homology of the quotient, it suffices to compute the homology of each summand.  For this, we appeal to Koszulity of the commutative operad.  Indeed each summand may be viewed as a tensor product over the blocks of the partition of a complex isomorphic to the bar construction of the commutative operad in arity equal to the size of the block.  This is simply because each branch emanating from an alternating flag has only standard edge contractions, with no distinguished vertices.  Appealing to Koszul duality of the commutative and Lie operads, denoted $\mathsf{Com}$ and $\mathsf{Lie}$,  the homology of a summand corresponding to a partition $b_1\cdc b_k$ is isomorphic to $\tensor_i \mathsf{Lie}(|b_i|)$, concentrated in the maximal degree, shown above to be $n$. 

Finally we recall that the dimension of the constituent arities of the Lie operad satisfy $\mathsf{Lie}(|b_i|) = (b_i-1)!$, which is also the number of cycles of length $b_i$ that can be formed from a set of size $b_i$.  Therefore, the dimension of the homology in degree $n$ is equal to the number of ways to partition $\{1\cdc n\}$ into $k$ non-empty blocks, and to subsequently associate a cycle of length $|b_i|$ with the numbers appearing in the block $b_i$.  In other words, $\beta_n(\op{S}_{n,k})$ is the number of permutations on $n$ letters having $k$ cycles, or $|s_{n,k}|$.
\end{proof}

\begin{remark}
	As mentioned above, Proposition $\ref{alt}$ can be seen as a corollary of this Theorem.  For this, compute the Euler characteristic $\chi(\Sigma^{-k}\op{S}_{n,k})$ in two ways.  As the alternating sum of Betti numbers we find $s_{n,k}$.  On the other hand, if we compute the alternating sum of the dimensions of the complex itself, we can use Koszulity of the commutative operad to collapse subtrees not containing a distinguished vertex, and label the vertices resulting from the collapse with the space of Lie words of appropriate arity.
	The degree of the result is determined by the valence of the distinguished vertex, call it $m$, and is equal to $n+1-m$.  The number of all such trees, after this collapse, is counted by the ways to partition the set $\{0\cdc n\}$ times the dimension of the space of Lie words on each block of the partition which, as the proof above indicates, is $|s_{n+1,m}|$, times the possible choices of alternating flags, which is ${m-1 \choose k}$.  We therefore see the alternating sum arising on the right hand side of Equation $\ref{tweedie}$.

\end{remark}

\section{Application to Commutative Graph Homology}\label{ghsec}  In this final section we would like to say how the Stirling complexes arose as a natural object of study, and what Theorem $\ref{main}$ tells us in this context.  Specifically, the complexes $\op{S}_{n,k}$ are isomorphic to subcomplexes of the Feynman transform of Lie graph homology in genus $1$.  We will give a recollection of these prerequisites, and we refer to \cite{WardMP} and \cite{WardWheels} for more details.

\subsection{Modular Operads}
Modular operads were introduced by Getzler and Kapranov in \cite{GeK2} as a generalization of the notion of an operad.  Modular operads permit composition along all graphs, not just along trees.  Here we consider graphs whose vertices carry a labeling by a non-negative integer, called the genus of a vertex, which allows us to keep track of loop contractions.  We call these modular graphs (see subsection $\ref{modgraphs}$).  Similar to an operad, compositions are generated by single edge contractions, hence the following definition.

\begin{definition}
	A (dg) modular operad $\mathsf{M}$ is a collection of $S_n$ modules $\mathsf{M}(g,n)$ indexed over pairs of natural numbers $g$ and $n$ with $2g+n\geq 3$, along with two families of linear maps called loop contractions and bridge contractions:
	\begin{itemize}
		\item Loop contractions: for all $g$, all $n\geq 2$ and all pairs $1\leq i<j\leq n$, there exists a linear map $$\circ_{ij}\colon\mathsf{M}(g,n) \to \mathsf{M}(g+1,n-2).$$
		\item Bridge contractions: for all $g_1,g_2,n_1,n_2$ and each $1\leq i \leq n_1$ and $1\leq j \leq n_2$ there exists a linear map $$\xycirc{i}{j}\colon \mathsf{M}(g_1,n_1)\tensor \mathsf{M}(g_2,n_2) \to \mathsf{M}(g_1+g_2,n_1+n_2-2).$$  
		\end{itemize}
\end{definition}
The terminology ``bridge'' and ``loop'' contractions comes from the classification of edges in a graph as either bridges or loops (Appendix $\ref{graphs}$).  Collectively we call these compositions ``edge contractions''.

Edge contractions are posited to satisfy a series of axioms analogous to the associativity and $S_n$-equivariance axioms for an operad.  These axioms should be intuitively understood via the graphical intuition which views each $\mathsf{M}(g,n)$ as a vertex with label $g$ and valence $n$.  The loop contraction $\circ_{ij}$ is viewed as gluing the legs labeled $i$ and $j$ to form a ``loop'' before contracting the result.  The bridge contraction $\xycirc{i}{j}$ glues together the corresponding legs on two vertices to form a ``bridge'' before contracting the result.  The associativity axioms assert that contracting edges of a graph in any order produces the same result, regardless of the order of the contractions.

With this intuition the associativity axioms for a modular operad are indexed by graphs with two edges and are divided into the following four families.  See also Figure $\ref{fig:assoc}$.

\begin{itemize}
	\item Contraction of two bridges $\xycirc{i^\prime}{j^\prime}\cdot\xycirc{l}{k} = \xycirc{l^\prime}{ k^\prime}\cdot\xycirc{i}{j}$
	\item Contraction of a loop and a bridge $\circ_{i^\prime j^\prime}\cdot\xycirc{l}{k} = \xycirc{l^\prime}{k^\prime}\cdot\circ_{i j}$
	\item Contraction of parallel bridges $\circ_{i^\prime j^\prime}\cdot\xycirc{l}{k} = \circ_{l^\prime k^\prime}\cdot\xycirc{i}{j}$
	\item Contraction of two loops $\circ_{i^\prime j^\prime}\cdot\circ_{lk} = \circ_{l^\prime k^\prime}\cdot\circ_{i j}$
\end{itemize}
  The notation $i^\prime, j^\prime, l^\prime, k^\prime$ means the label of the leg formerly labeled by $i,j,k,l$.  The precise label will depend on a chosen convention for relabeling legs after edge contraction.  See \cite[Page 33]{WardMP} for one such convention.

\begin{figure}
	\centering
	\includegraphics[scale=.6]{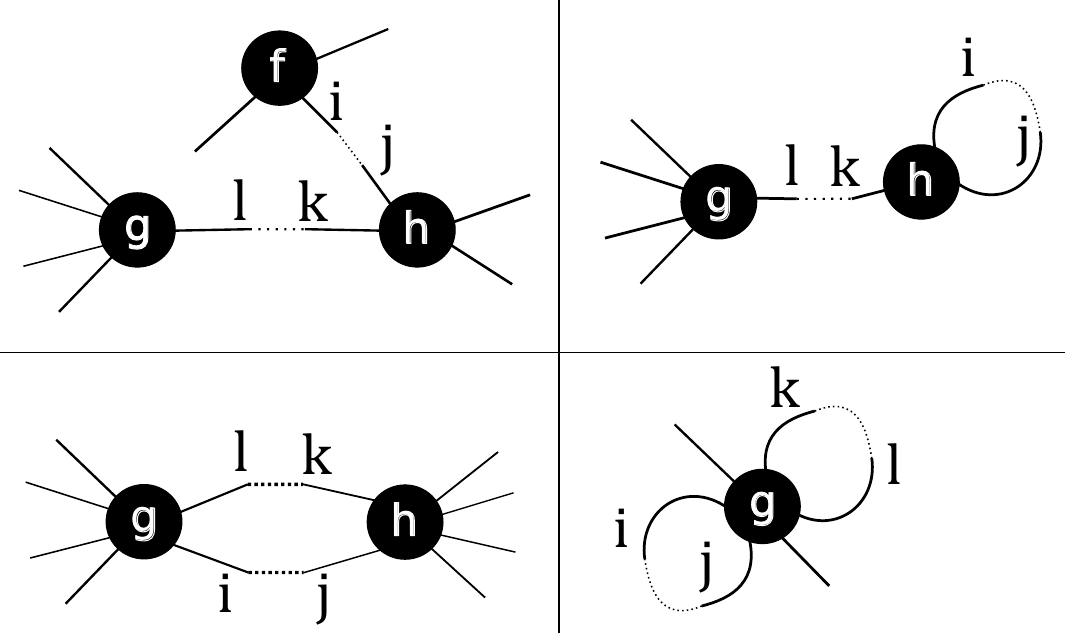}
	\caption{The associativity relations in a modular operad are indexed by graphs with two edges.  These relations are divided into four types, corresponding to the four pictured two-edged graphs.  The leg labels are suppressed except for those which were used in the formation of the two edges.  The formed edges are indicated with dashed lines.  The target of the composition of operations is determined by contracting the edges, while keeping track of the total genus via the genus label.}
	\label{fig:assoc}
\end{figure}

Finally, both loop contraction and bridge contraction satisfy compatibility with the symmetric group actions.  These compatibility conditions are of the form $
\circ_{ij}\cdot\sigma
= 
\circ_{ij}(\sigma) \circ_{\sigma^{-1}(i)\sigma^{-1}(j)} 
$ 
and $
\xycirc{i}{j}\cdot(\sigma,\tau)
= 
\sigma\xycirc{i}{j}\tau \cdot \xycirc{\sigma^{-1}(i)}{\tau^{-1}(j)} 
$ for certain permutations $\circ_{ij}(\sigma)$ and $\sigma\xycirc{i}{j}\tau$ determined by permutations $\sigma$ and $\tau$ and leg labels $i$ and $j$.  As in the case of an operad, these equivariance axioms may be understood by forming one-edged graphs as specified by the edge contraction and the permutation, and asserting that if the labeled one-edged graphs are the same then the operations are the same.

\subsection{The Feynman transform}

  The Feynman transform \cite{GeK2} is the modular operadic generalization of the bar construction.  Viewed as a functor, its input is a modular operad, call it $\mathsf{M}$, and its output may be understood as the direct sum over graphs whose vertices are labeled by $\mathsf{M}$,  (or by its linear dual depending on conventions).  The result is a family of chain complexes $\mathsf{FT}(\mathsf{M})(g,n)$ for each total genus $g$ and number of legs $n$, with differential given by a sum over edge contractions (or dually edge expansions).  When an edge is contracted, the modular operad structure of $\mathsf{M}$ is used to determine the new vertex label. 

To make the above description precise, we proceed as follows.  Let $\mathsf{M}$ be a modular operad.  Let $\gamma$ be a modular graph (see Appendix $\ref{modgraphs}$).  Let $V(\gamma)$ and $E(\gamma)$ denote the sets of vertices and (internal) edges of $\gamma$ respectively.  If $v\in V(\gamma)$ has genus $g(v)$, valence $n(v)$ and set of adjacent flags $A(v)$, we define a vector space $\mathsf{M}(v):=\mathsf{M}(g,A(v))$ where $\mathsf{M}(g,-)$ is viewed as a functor from the category of all finite sets and bijections via left Kan extension.  In particular $\mathsf{M}(v)\cong \mathsf{M}(g(v),n(v))$, but non-canonically so.  We then define
\begin{equation*}
\mathsf{M}(\gamma) := \ds\tensor_{v\in V(\gamma)} \mathsf{M}(v).
\end{equation*}

Additionally, to $\gamma$ we associate the one-dimensional graded vector space
$\mathfrak{K}(\gamma) := det(E(\gamma))$,
where $det$ was defined in subsection $\ref{detsec}$.  With these pre-requisites, we define the chain complex
\begin{equation}\label{FTeq}
\mathsf{FT}(\mathsf{M})(g,n) := \ds\bigoplus_{\substack{\text{iso.\ classes of } \\ (g,n)-\text{graphs } \gamma}} \mathsf{M}(\gamma)\tensor_{Aut(\gamma)} \mathfrak{K}(\gamma),
\end{equation}
with differential given by the sum over edge contractions.  Here $Aut(\gamma)$ denotes the group of leg fixing automorphisms of $\gamma$.      

Each chain complex $\mathsf{FT}(\mathsf{M})(g,n)$ is bigraded by the number of edges of the underlying graph and the sum of the degrees of the vertex labels, which we call the internal degree.  We write $\mathsf{FT}(\mathsf{M})(g,n)^{r,s}$ for the span of those graphs having $r$ edges and internal degree $s$.   The differential in the Feynman transform contracts one edge, using the degree $0$ modular operad structure maps, so it has the form
\begin{equation}\label{deq}
 \mathsf{FT}(\mathsf{M})(g,n)^{r,s}\stackrel{d}\longrightarrow  \mathsf{FT}(\mathsf{M})(g,n)^{r-1,s},
\end{equation}
and so the complex splits into a direct sum of subcomplexes indexed over internal degree.  The Stirling complexes, studied above, are examples of such subcomplexes for a particular choice of $\mathsf{M}$ as we shall subsequently explain.   

We remark that the Feynman transform as originally considered by Getzler and Kapranov \cite{GeK2} is the linear dual of the definition given here, so it may be more appropriate to call this the coFeynman transform.

\subsection{Lie Graph Homology}

Given a modular operad $\mathsf{M}$, the family of chain complexes $\mathsf{FT}(\mathsf{M})(g,n)$ can themselves be composed along graphs by freely grafting the legs of the graph together.  Thus the Feynman transform forms something akin to a modular operad, but the degree of the compositions is non-zero.  The notion of $\mathfrak{K}$-modular operads, in which composition formally has degree $1$, was also introduced in \cite{GeK2} as a target for the Feynman transform.  Conversely, an analogous construction produces a modular operad from a $\mathfrak{K}$-modular operad, and so we view the Feynman transform as a pair of functors
\begin{equation}
\left\{\text{modular operads} \right\} \stackrel{\mathsf{FT}}\leftrightarrows \left\{\mathfrak{K}\text{-modular operads} \right\}.
\end{equation}

As above, we write $\mathsf{Lie}$ for the Lie operad.  We may choose to view $\mathsf{Lie}$ as a modular operad by declaring the structure map corresponding to any graph of positive genus to be $0$.  Alternatively, we can take the shifted operadic suspension of the Lie operad and then extend the result by $0$ to higher genus, and the result will be a $\mathfrak{K}$-modular operad.  Denote this $\mathfrak{K}$-modular operad $\Sigma\mathfrak{s}^{-1}\mathsf{Lie}$.  By ``Lie graph homology'' in this article we refer to the modular operad $H_\ast(\mathsf{FT}(\Sigma\mathfrak{s}^{-1}\mathsf{Lie}))$.

Lie graph homology was studied in \cite{CHKV}, where it was shown to be calculable as the group homology of a family of groups $\Gamma_{g,n}$, and so we use notation 
\begin{equation*}
H_\ast(\Gamma)(g,n) :=H_\ast(\Gamma_{g,n}) = H_\ast(\mathsf{FT}(\Sigma\mathfrak{s}^{-1}\mathsf{Lie}))(g,n),
\end{equation*}
and the notation $H_\ast(\Gamma)$ for the associated modular operad.

Below, we will connect the Stirling complexes to the genus $1$ component of the Feynman transform of the modular operad $H_\ast(\Gamma)$.  This in turn depends only on the graded vector spaces $H_\ast(\Gamma_{g,n})$ for $g\in \{0,1\}$ and the edge contractions between them.  This structure can be completely characterized as follows. 

First the homology of these spaces is given by:
\begin{lemma}\label{hlem}\cite{CHKV}  When $g\leq 1$, the homology of $\Gamma_{g,n}$ is:
	$$H_k(\Gamma_{0,n}) = \begin{cases} V_n & \text{ if } k=0 \\ 0 & \text{ else } \end{cases}$$
 and
 	$$H_k(\Gamma_{1,n}) = \begin{cases} V_{n-k,1^k} & \text{ if } k \text{ is even} \text{ and } 0\leq k\leq n-1 \\ 0 & \text{else} \end{cases}$$
\end{lemma}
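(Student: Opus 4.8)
Since the lemma is attributed to \cite{CHKV}, my plan is to reconstruct the two computations it packages, treating the genus-$0$ and genus-$1$ components separately. For genus $0$ the component $\mathsf{FT}(\Sigma\mathfrak{s}^{-1}\mathsf{Lie})(0,n)$ is supported entirely on trees, so it is nothing but the operadic bar construction of the (suspended) Lie operad. I would invoke Koszulity of $\mathsf{Lie}$ exactly as in the proof of Theorem \ref{main}: the bar homology of a Koszul operad is concentrated in a single degree and computes the Koszul dual cooperad. Since $\mathsf{Lie}$ is Koszul dual to $\mathsf{Com}$, the homology is (a suspension of) $\mathsf{Com}(n)$, which after the shift $\Sigma\mathfrak{s}^{-1}$ is the one-dimensional trivial representation $V_n$ sitting in degree $0$. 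This yields the first formula.

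The genus-$1$ case is the substance of the lemma, and here I would reduce to a single-loop model. Every connected graph of first Betti number $1$ is an embedded cycle --- the core loop --- with rooted trees grafted onto its vertices. The first step is to contract all tree edges using Koszulity of $\mathsf{Com}$ and $\mathsf{Lie}$, precisely as the maximal forests are collapsed in the final paragraphs of the proof of Theorem \ref{main}; this replaces each grafted forest by a single $\mathsf{Com}$-labelled (hence trivially decorated) corner and reduces the complex, up to quasi-isomorphism, to the subquotient spanned by ``necklaces'': cyclic arrangements of blocks of $\{1\cdc n\}$ around a loop, with the residual differential merging adjacent blocks.

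The second step is to compute the homology of this necklace complex $S_n$-equivariantly and to identify the surviving pieces. Assembling the necklace homology over all loop lengths, I expect the degree-$k$ part to be the $k$-th exterior power of the reduced permutation representation, so that the classical identity $\bigwedge^k V_{n-1,1}\cong V_{n-k,1^k}$ produces exactly the hooks claimed. The parity constraint ``$k$ even'' is where the orientation data $\mathfrak{K}(\gamma)=det(E(\gamma))$ enters: the isomorphism identifying a necklace with its orientation-reversed mirror acts on $det(E)$ by a length-dependent sign, and tracking this sign against the homological degree forces the odd-degree contributions to cancel while the even ones survive in the range $0\le k\le n-1$. I expect this sign bookkeeping to be the main obstacle --- one must pin down the action of loop-reversal on $det(E)$ and on the residual $\mathsf{Com}$-labels and verify the cancellation pattern carefully --- whereas the exterior-power identification, once the complex has been reduced to necklaces, is comparatively formal. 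Conceptually, the even-degree exterior algebra $\bigoplus_{k\text{ even}}\bigwedge^k V_{n-1,1}$ is the $\Z/2$-invariant part of a torus cohomology, matching the elliptic involution in the genus-$1$ picture, which is a useful sanity check on the answer.
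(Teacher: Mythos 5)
First, a point of comparison: the paper does not prove Lemma \ref{hlem} at all --- it is imported wholesale from \cite{CHKV}, and the proof there goes through the group rather than the graph complex. One identifies $\Gamma_{1,n}$ concretely as the mapping class group of a circle with $n$ legs: it is an extension of the form $\Z^{n-1}\rtimes\Z/2$, where $\Z^{n-1}$ records sliding the legs around the circle modulo simultaneous rotation and the flip acts by $-1$. Rationally, $H_k(\Gamma_{1,n})=\bigl(\Lambda^k\Q^{n-1}\bigr)^{\Z/2}$, and since the flip acts by $(-1)^k$ on $\Lambda^k$ of the reduced permutation representation, this is $\Lambda^k V_{n-1,1}\cong V_{n-k,1^k}$ for $k$ even and $0$ for $k$ odd. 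Your proposal instead computes $H_\ast(\mathsf{FT}(\Sigma\mathfrak{s}^{-1}\mathsf{Lie}))(1,n)$ directly: genus $0$ by Koszulity (which is fine), and genus $1$ by filtering by the length of the core cycle and reducing to a necklace complex, which is a cellular model for the torus $T^{n-1}=T^n/S^1$ (cells indexed by cyclic arrangements of blocks) together with its reflection action. That is a genuinely different, self-contained route; its endpoint is the same torus computation that the classifying space of $\Z^{n-1}\rtimes\Z/2$ performs automatically in \cite{CHKV}, and what it buys is independence from the group-theoretic identification, at the cost of doing the graph-complex bookkeeping by hand.

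Second, the gap: the two decisive steps of your plan are asserted rather than proved, and they are exactly where the content of the lemma lies. (1) The collapse of the grafted trees is not an instance of the operadic Koszulity used in Theorem \ref{main}. There, the collapsed branches contain no special vertices; here, every branch is rooted at a cycle vertex that also carries the two cycle flags, so what is needed is a module-level statement: the species $S\mapsto \mathsf{Lie}(S\sqcup\{\ast\})$ is a \emph{free right $\mathsf{Lie}$-module} with generators $\mathsf{Com}$ (this is PBW: the enveloping algebra of a free Lie algebra is a polynomial algebra). Only this makes the beads ``trivially decorated'' with homology in a single predictable degree; bare Koszulity of $\mathsf{Lie}$ does not apply to these rooted-tree-with-marked-root complexes. (2) The parity statement --- that the reflection of a necklace, acting simultaneously on $det(E)$, on the cyclic-$\mathsf{Lie}$ suspension signs, and on the homology of $T^{n-1}$ by $(-1)^k$, kills precisely the odd-degree classes and the palindromic necklaces with odd twist --- is the origin of the condition ``$k$ even.'' It is not a finishing touch; it \emph{is} the theorem. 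You correctly identify it as the main obstacle, but you leave it as an expectation, along with the degree bookkeeping that places the answer in degrees $0\le k\le n-1$. So the outline is right and I believe it can be completed, but as written the proposal does not yet establish the lemma: its crux is stated as a conjecture inside the proof.
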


Second, to characterize the modular operadic structure maps between these spaces, we first observe that edge contractions between degree $0$ vertices will be isomorphisms.  Thus the only non-trivial structure maps landing in genus $\leq 1$ are contractions along bridge edges adjacent to a genus $0$ and genus $1$ vertex.  To characterize these operations, we describe the vector spaces $H_k(\Gamma_{1,k+l})$ in terms of such compositions as follows:

\begin{lemma}\cite{WardWheels}\label{basislem}  Fix any non-zero vectors $\mu_l\in H_0(\Gamma_{0,l+1})$ and $\alpha_k\in H_k(\Gamma_{1,k+1})$.  The vector space $H_k(\Gamma_{1,k+l})$ has a basis consisting of edge contractions of the form $\sigma(\mu_l\circ_1 \alpha_k)$ over all $\sigma \in S_{k+l-1}$.
\end{lemma}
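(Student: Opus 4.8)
The plan is to identify the target as an irreducible symmetric-group module and then recognize the proposed spanning set as the image of an induced representation. By Lemma \ref{hlem}, since $k$ is even and $0\le k\le (k+l)-1$, we have $H_k(\Gamma_{1,k+l})\cong V_{l,1^k}$, the irreducible $S_{k+l}$-module associated to the hook partition $(l,1^k)$, of dimension $\binom{k+l-1}{k}$. The two generators are forced to be essentially unique: $H_0(\Gamma_{0,l+1})\cong V_{l+1}$ is the one–dimensional trivial $S_{l+1}$-module and $H_k(\Gamma_{1,k+1})\cong V_{1^{k+1}}$ is the one–dimensional sign $S_{k+1}$-module (here stability forces $l\ge 2$), so $\mu_l$ and $\alpha_k$ are each unique up to scalar. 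Consequently the class $v:=\mu_l\circ_1\alpha_k\in H_k(\Gamma_{1,k+l})$ is symmetric under permutations of the $l$ legs inherited from $\mu_l$ and anti-symmetric under permutations of the $k$ legs inherited from $\alpha_k$.

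First I would establish the full $S_{k+l}$-statement, which is easy: the $S_{k+l}$-span of $v$ is by construction an $S_{k+l}$-submodule of the irreducible $V_{l,1^k}$, hence is either $0$ or everything. To rule out the first case one needs only $v\ne 0$, which I would deduce from the rewriting rule of \cite[Lemma 3.2]{WardWheels} recalled in Section~3: it expresses an arbitrary genus-$1$ class as a combination of bridge contractions of the generators $\alpha_k$ with genus-$0$ classes, so these contractions cannot all vanish while $V_{l,1^k}\ne 0$.

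Next I would cut the group down from $S_{k+l}$ to $S_{k+l-1}$, taken to be the stabilizer of one of the $l$ legs coming from $\mu_l$ (the analogous reduction using an $\alpha_l$-leg provably fails on dimension grounds, which is how one locates the correct convention). For $k\ge 2$ the branching rule gives
\begin{equation*}
\mathrm{Res}^{S_{k+l}}_{S_{k+l-1}}V_{l,1^k}\;\cong\;V_{l-1,1^k}\oplus V_{l,1^{k-1}},
\end{equation*}
a multiplicity–free sum of two hooks. On the other hand, the symmetry of $v$ recorded above furnishes an $S_{k+l-1}$-equivariant surjection $\mathrm{Ind}_{S_{l-1}\times S_k}^{S_{k+l-1}}(\mathrm{triv}\boxtimes\mathrm{sgn})\twoheadrightarrow \Q[S_{k+l-1}]\cdot v$, and the Pieri rule identifies this induced module as $V_{l,1^{k-1}}\oplus V_{l-1,1^k}$ — exactly the same two constituents. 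Thus $\Q[S_{k+l-1}]\cdot v$ is simultaneously a submodule and a quotient realizing these two hooks, so it coincides with $\mathrm{Res}\,V_{l,1^k}$ as soon as $v$ has non-zero component in each summand; a dimension count then shows the $S_{k+l-1}$-orbit spans $V_{l,1^k}$ and so contains a basis. (The cases $k=0$ are one–dimensional and follow directly from $v\ne0$.)

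The main obstacle is the non-degeneracy in this last step: showing that $v$ is not supported on a single branching constituent, equivalently that the two projections onto $V_{l-1,1^k}$ and $V_{l,1^{k-1}}$ are both non-zero. I expect to settle this either by an explicit computation in the Feynman-transform model $\mathsf{FT}(\Sigma\fr{s}^{-1}\mathsf{Lie})$, representing $v$ by a two-vertex graph and projecting onto each constituent, or, more structurally, by proving that the map $\mathrm{Ind}_{S_{l-1}\times S_k}^{S_{k+l-1}}(\mathrm{triv}\boxtimes\mathrm{sgn})\to V_{l,1^k}$ is injective. The latter I would reduce to the freeness of $H_k(\Gamma_{1,\bullet})$ as a module over the genus-$0$ operad $\mathsf{Com}=H_0(\Gamma_{0,\bullet})$, which is precisely the infinitesimal-bimodule Koszulity alluded to in the introduction and dovetails with Theorem \ref{main}.
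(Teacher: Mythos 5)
The paper does not actually prove this lemma internally: it is imported from \cite[Lemma 3.2]{WardWheels} (``this result is a corollary of \cite[Lemma 3.2]{WardWheels}''), so the comparison here is between your argument and a citation. Your representation-theoretic framework is sound as far as it goes: the identification $H_k(\Gamma_{1,k+l})\cong V_{l,1^k}$ from Lemma \ref{hlem}, the observation that $\Q[S_{k+l-1}]\cdot v$ (with $v=\mu_l\circ_1\alpha_k$) is a quotient of $\mathrm{Ind}_{S_{l-1}\times S_k}^{S_{k+l-1}}(\mathrm{triv}\boxtimes\mathrm{sgn})$, and the coincidence of the two hooks $V_{l-1,1^k}$ and $V_{l,1^{k-1}}$ in both the Pieri decomposition of that induced module and the branching of $\mathrm{Res}^{S_{k+l}}_{S_{k+l-1}}V_{l,1^k}$ are all correct, and they correctly reduce the lemma to two assertions: (a) $v\neq 0$, and (b) $v$ has non-zero projection onto each of the two constituents of the multiplicity-free restriction.

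However, (a) and (b) are precisely where the content of the lemma lives, and the proposal proves neither. Point (b) is explicitly deferred (``I expect to settle this either by\dots''), and no soft argument can close it: the $(\mathrm{triv}\boxtimes\mathrm{sgn})$-isotypic subspace of $\mathrm{Res}^{S_{k+l}}_{S_{k+l-1}}V_{l,1^k}$ is two-dimensional, one dimension in each constituent, and the symmetry of $v$ alone is compatible with $v$ lying entirely inside $V_{l,1^{k-1}}$ --- in which case $\Q[S_{k+l-1}]\cdot v$ would have dimension $\binom{k+l-2}{k-1}<\binom{k+l-1}{k}$ and the claimed basis would fail to span. Ruling this out requires a chain-level computation (e.g.\ in the Feynman transform model) or structural input, neither of which is carried out. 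Point (a) is handled by invoking \cite[Lemma 3.2]{WardWheels}, which is the very result the paper cites as the source of the entire lemma; moreover, what you attribute to it --- that an arbitrary class in $H_k(\Gamma_{1,k+l})$ is a combination of such bridge contractions --- is the spanning half of the statement being proved. So the argument is either circular (if that citation is allowed in this strength, the basis claim follows at once from the rewriting rule plus the count $\dim V_{l,1^k}=\binom{k+l-1}{k}$, and the branching analysis is unnecessary) or incomplete (if it is not allowed, both $v\neq 0$ and the non-degeneracy remain open). As written, the proposal establishes only that $\Q[S_{k+l-1}]\cdot v$ is one of $0$, $V_{l-1,1^k}$, $V_{l,1^{k-1}}$, or all of $V_{l,1^k}$, without determining which.
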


To be specific, this result is a corollary of \cite[Lemma 3.2]{WardWheels} which gives the precise form of the relations as well.

Here we choose to view our graphs as labeled by $\{0\cdc n-1\} \cong \{1\cdc n\}$ via the isomorphism sending $0$ to $n$ and fixing those $i$ with $0<i<n$.  Then $\circ_1:= \xycirc{1}{0}$ denotes the operadic composition which glues the root (by convention the leaf labeled by $0$) of $\alpha$ on to the leaf labeled by $1$ of $\mu$.  The permutations $\sigma$ fix $0$, hence the lemma says that has a basis given by those compositions in-which $0$ is adjacent to the root (see Figure $\ref{fig:edge2}$).

\begin{figure}
	\centering
	\includegraphics[scale=.95]{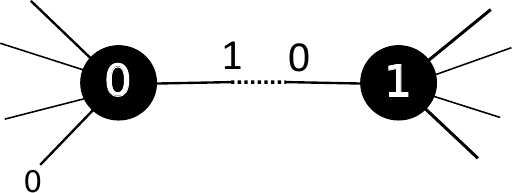}
	\caption{Compose the two indicated vertices via the composition $\circ_1$. 
	The genus $1$ vertex carries a label in $H_k(\Gamma_{1,k+1})$ and the genus $0$ vertex carries a label in $H_0(\Gamma_{0,l+1})$.  After choices of generators of these one dimensional vector spaces, call them $\alpha_k$ and $\mu_l$ respectively, each way to label the legs in the pictured tree results in a class in $H_k(\Gamma_{1,k+l})$.  Lemma $\ref{basislem}$ says that the ${k+l-1} \choose {k}$ such classes form a basis.  }
	\label{fig:edge2}
\end{figure}

\subsection{Stirling complexes and the Feynman transform of Lie graph homology.}

With an understanding of the modular operad $H_\ast(\Gamma)$ in genus $\leq 1$, we now turn to consideration of the chain complex $\mathsf{FT}(H_\ast(\Gamma))(1,n)$.  By Equation $\ref{deq}$, we know that this complex splits over internal degree.  By Lemma $\ref{hlem}$ we know that it is non-zero only when the internal degree is even.  The following theorem relates the Stirling complexes to the summands corresponding to non-zero internal degree.

\begin{theorem}\label{isothm}  Let $k$ be a positive, even integer.  There is an isomorphism of chain complexes
$$\mathsf{FT}(H_\ast(\Gamma))(1,n+1)^{\ast,k}\cong \op{S}_{n,k}.$$
\end{theorem}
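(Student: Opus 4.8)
The plan is to construct the isomorphism by first matching the underlying bigraded vector spaces summand-by-summand and then verifying that the two differentials agree. I would begin by determining which graphs contribute to $\mathsf{FT}(H_\ast(\Gamma))(1,n+1)^{\ast,k}$. By Lemma~\ref{hlem} a genus-$0$ vertex of valence $m$ carries the one-dimensional label $V_m = H_0(\Gamma_{0,m})$ concentrated in internal degree $0$, while a genus-$1$ vertex of valence $m$ carries $H_k(\Gamma_{1,m}) = V_{m-k,1^k}$ in internal (even) degree $k$, which is nonzero exactly when $m \geq k+1$. Since the total genus is $1$ and the internal degree $k$ is positive, the only graphs that appear are \emph{trees} carrying exactly one genus-$1$ vertex, all of whose internal degree is concentrated there: a loop would force a genus-$0$ graph of internal degree $0$, and a second positive-genus vertex, or a genus-$1$ vertex together with a loop, would exceed total genus $1$. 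Orienting each such tree toward the leg labeled $0$, as in the Stirling-tree conventions, the unique genus-$1$ vertex plays the role of the distinguished vertex $v$.

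Next I would match the one-dimensional summands. The genus-$0$ labels are the arities of the commutative operad and contribute nothing beyond the combinatorics of the tree, so the only sign data besides $\mathfrak{K}(\gamma) = det(E(\gamma))$ arises from the genus-$1$ vertex. By Lemma~\ref{basislem}, the label $H_k(\Gamma_{1,m}) = V_{m-k,1^k}$ has a basis $\{e_A\}$ indexed by the $k$-element subsets $A$ of the $m-1$ input flags at $v$, where $e_A = \mu_{m-k}\circ_1 \alpha_k$; since $\alpha_k$ spans the sign representation and $\mu_{m-k}$ is symmetric, $e_A$ transforms under $S_A$ exactly like a generator of $det(A)$. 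Expanding the genus-$1$ label in this basis and comparing $Aut(\gamma)$-coinvariants then produces a bijection between isomorphism classes of these decorated trees and isomorphism classes of Stirling trees $(\mathsf{t},v,A)$ of type $(n,k)$, under which $\mathsf{M}(\gamma)\otimes_{Aut(\gamma)} det(E(\gamma))$ is identified with $det(E(\mathsf{t}))\otimes det(A)$. Tracking the number of edges then matches the homological grading with the left-hand bigrading in Equation~\ref{deq}.

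The heart of the argument is to check that the edge-contraction differential of the Feynman transform corresponds to the Stirling differential defined in Subsection~\ref{diffsec}. An edge that is not adjacent to $v$, or is adjacent to $v$ at a flag not belonging to $A$, is contracted using the relevant structure map of $H_\ast(\Gamma)$; in each of these cases the map carries the chosen basis line to a single basis line, identifying the distinguished vertex and the set $A$ canonically and reproducing the standard contraction of Equation~\ref{edge}. The only remaining edges are those adjacent to $v$ whose $v$-flag is an alternating flag $a\in A$; contracting such an edge merges the neighboring genus-$0$ vertex into $v$ through the bridge contraction $H_0(\Gamma_{0,\cdot})\otimes H_k(\Gamma_{1,\cdot})\to H_k(\Gamma_{1,\cdot})$, producing a label that must be re-expanded in the basis $\{e_A\}$. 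This re-expansion is precisely the content of \cite[Lemma 3.2]{WardWheels}, and I expect it to yield exactly the sum over the newly adjacent flags $b\in B$, with the signs prescribed by $d_{e,b}$ in Equation~\ref{edgeb}.

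This last step is where I expect the real work, and the main obstacle, to lie: verifying that the signs coming out of the relation \cite[Lemma 3.2]{WardWheels}, together with the $det(E(\gamma))$ orientations carried through the $Aut(\gamma)$-coinvariants, agree on the nose with the signs built into Equation~\ref{edgeb}, and that no summands are inadvertently killed or doubled when passing to coinvariants. Once the case of an edge adjacent to $v$ at an alternating flag is pinned down, the remaining cases are immediate, and the compatibility with the $S_{n+1}$-action is exactly the translation already recorded after Equation~\ref{action}.
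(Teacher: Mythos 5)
Your proposal is correct and follows essentially the same route as the paper's own proof: identify the contributing graphs as trees with a unique genus-$1$ (distinguished) vertex, use the basis of Lemma~\ref{basislem} to identify the genus-$1$ vertex label with $det(A)$ over choices of $k$ input flags, and reduce the differential comparison to the case of an edge meeting an alternating flag, handled by the rewriting rule of \cite[Lemma 3.2]{WardWheels}. The sign verification you flag as the remaining obstacle is dispatched in the paper by the same citation, with the remark that the Stirling differential of Subsection~\ref{diffsec} was defined precisely so that this identification becomes a chain map.
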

\begin{proof}  Fix such a $k$, and assume $k\leq n$, else the statement is vacuous.  Consider the chain complex $\mathsf{FT}(H_\ast(\Gamma))(1,n+1)^{\ast,k}$.  By Equation $\ref{FTeq}$, this chain complex is a direct sum indexed over graphs of type $(1,n+1)$, i.e.\ total genus $1$ and with $n+1$ labeled legs.

By definition, the total genus of a modular graph is the sum of the genera of the vertex labels with the first Betti number of the given graph.  If the vertex labels all have genus $0$, then the internal degree would be $0$ (by Lemma $\ref{hlem}$).  Since this is not the case, there must be a vertex of positive genus.  Since the total genus is $1$, this can only happen if the graph is a tree and there is a unique vertex of genus $1$, with all other vertices of genus $0$.  We thus conclude that the homogeneous elements in $\mathsf{FT}(H_\ast(\Gamma))(1,n+1)^{i,k}$ correspond to trees with $i$ edges and $n+1$ labeled legs, along with a distinguished vertex (namely the unique vertex of genus $1$).

Given such an homogeneous element, the degree $0$ vertices of valence $l+1$ are labeled with homology classes from $H_\ast(\Gamma_{0,l+1})$ which is simply the ground field concentrated in degree $0$.  The genus $1$, or distinguished, vertex is therefore labeled with a homology class in $H_k(\Gamma_{1,m})$ for some $m$.

Using the basis of Lemma $\ref{basislem}$, we may thus view an homogeneous element in $\mathsf{FT}(H_\ast(\Gamma))(1,n+1)^{i,k}$ as a tree with a distinguished vertex which is itself labeled by a tree with one edge and two vertices.  Among these two ``inner'' vertices, the upper one is labeled by the genus $1$ alternating class $\alpha_k$ and the lower one is labeled by the genus $0$ class $\mu_l$.  In order to specify a Stirling tree from this data, we color all legs adjacent to the genus $1$ vertex red, and all the others black.  See Figure $\ref{fig:stc}$.

\begin{figure}
	\centering
	\includegraphics[scale=1.15]{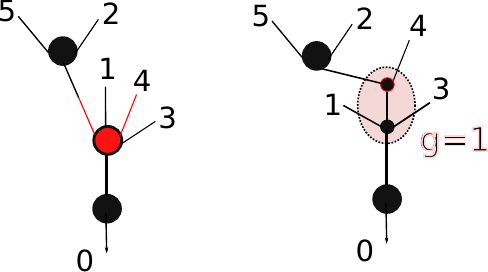}
	\caption{The Stirling tree on the left corresponds to a modular graph of total genus $1$ on the right.  The unique genus $1$ vertex (with dotted boundary) is itself labeled by a composition of the commutative product $\mu_l$ with the alternating class $\alpha_{k}$.  (Pictured is the case $k=2$.)  This composition specifies the label of the genus $1$ vertex.}
	\label{fig:stc}
\end{figure}

This gives an isomorphism of graded vector spaces.  In hindsight, the formula for the differential of the Stirling complexes given above (Subsection $\ref{diffsec}$) was chosen such that this isomorphism of graded vector spaces is also an isomorphism of chain complexes.  To see that this is the case, observe that both differentials are given by sums over edge contraction, and it's immediate that the terms coincide when contracting an edge which is not adjacent to the distinguished vertex, or when contracting an edge which is adjacent to the distinguished vertex but is non-alternating/adjacent to the lower inner vertex.  The only non-trivial case, then, is to show that contracting an edge adjacent to the upper/alternating inner vertex corresponds to the description given above for contracting an edge in a Stirling tree when said edge contains an alternating flag.  This in turn requires rewriting a one-edge composition landing in $H_k(\Gamma_{1,m})$ in terms of the chosen basis.  A formula for this rewriting rule is given in \cite[Lemma 3.2]{WardWheels}, and the description in terms of Stirling trees is simply a reformulation of the statement of that result.  \end{proof}

Combining this result with Theorem $\ref{main}$, we have the following corollary pertaining to commutative graph homology.  Let $\overline{\mathsf{Com}}$ be the modular operad defined by $\overline{\mathsf{Com}}(g,n) = \mathbb{Q}$, all of whose structure maps are the canonical isomorphisms.  

\begin{corollary}\label{gccor} There is an isomorphism of $S_{n+1}$-modules $$H_{\ast+1}(\mathsf{FT}(\overline{\mathsf{Com}}))(1,n+1)\cong \bigoplus_{i=1}^{\floor{n/2}} H_\ast(\op{S}_{n,2i}).$$
\end{corollary}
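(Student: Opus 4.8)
The plan is to compute the commutative graph homology $H_\ast(\mathsf{FT}(\overline{\mathsf{Com}}))(1,n+1)$ by transporting it onto the Feynman transform of Lie graph homology, where Theorems \ref{isothm} and \ref{main} already do the work. Concretely, I would produce an $S_{n+1}$-equivariant spectral sequence converging to $H_\ast(\mathsf{FT}(\overline{\mathsf{Com}}))(1,n+1)$ whose associated graded is the complex $\mathsf{FT}(H_\ast(\Gamma))(1,n+1)$. Once that bridge is in place, Equation \ref{deq} splits this page over internal degree $k$, Theorem \ref{isothm} identifies the $k$-th summand with the Stirling complex $\op{S}_{n,k}$, and Theorem \ref{main} computes each of their homologies. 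The even parity of $k$ (equivalently $k=2i$) is forced by Lemma \ref{hlem}, while the range $1\le i\le\floor{n/2}$ comes from $\op{S}_{n,k}$ being acyclic unless $2\le k\le n$ (Lemma \ref{bounds}).

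To build the bridge I would filter the commutative graph complex by the length of the unique embedded cycle of a genus $1$ graph. Contracting an edge off the cycle (a forest edge) preserves the cycle length, whereas contracting a cycle edge lowers it, so on the associated graded only forest contraction survives as the $d_0$ differential. The surviving piece in cycle length $c$ consists of a cycle of length $c$ with labeled forests attached; distributing the legs around the cycle realizes precisely the basis of Lemma \ref{basislem} for $H_{c-1}(\Gamma_{1,\bullet})$, so that reinterpreting the cycle as a single distinguished genus $1$ vertex identifies this piece with $\mathsf{FT}(H_\ast(\Gamma))(1,n+1)^{\ast,c-1}$, i.e.\ with $\op{S}_{n,c-1}$ by Theorem \ref{isothm}, compatibly with the forest/Stirling differential. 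Two bookkeeping points enter: a cycle of even length carries an orientation-reversing automorphism acting by $-1$ on $\det$ of its edge set, so it vanishes and only odd $c$ (even $k=c-1$) contribute; and the cycle of length $c$ carries one more edge than the internal degree $c-1$ of the class it represents, which produces the uniform degree shift recorded by $H_{\ast+1}$ on the left-hand side.

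The computation then finishes for a purely formal reason, which is the payoff of Theorem \ref{main}. Taking $d_0$-homology of the associated graded yields $E_1=\bigoplus_{k\ \mathrm{even}}H_\ast(\op{S}_{n,k})$, which by Theorem \ref{main} is concentrated in the single homological degree $n$, equivalently in commutative edge-degree $n+1$. Since every higher spectral sequence differential strictly lowers total degree by one, all of them must vanish on a page concentrated in one degree; the spectral sequence degenerates at $E_1$, giving $H_{\ast+1}(\mathsf{FT}(\overline{\mathsf{Com}}))(1,n+1)\cong\bigoplus_{i=1}^{\floor{n/2}}H_\ast(\op{S}_{n,2i})$ after reindexing $k=2i$. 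Equivariance is automatic: the filtration by cycle length is preserved by the leg-permutation action of $S_{n+1}$, so the entire spectral sequence lives in $S_{n+1}$-modules, and the identification of the associated graded with $\mathsf{FT}(H_\ast(\Gamma))(1,n+1)$ intertwines the obvious leg action with the action on $\op{S}_{n,k}$ defined in Subsection \ref{detsec}, precisely because Theorem \ref{isothm} is equivariant.

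The step I expect to require the most care is the associated-graded identification in the second paragraph: one must verify that the forest-contraction differential on a cycle of length $c$ matches the Stirling differential on $\op{S}_{n,c-1}$ under the dictionary of Theorem \ref{isothm} and Lemma \ref{basislem}, and in particular that the $\det$-signs attached to the cycle edges are bookkept so that exactly the even-$k$ summands survive. This is the genus $1$ Koszulity underlying \cite{WardMP}, and it is where the real content sits. Everything else is formal: the degeneration is forced by the single-degree concentration in Theorem \ref{main}, and the $S_{n+1}$-equivariance follows from the naturality in the leg labels of both the filtration and the dictionary of Theorem \ref{isothm}.
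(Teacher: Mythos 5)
Your bridge step is where the argument breaks, and it is not a fixable bookkeeping issue. Filter $\mathsf{FT}(\overline{\mathsf{Com}})(1,n+1)$ by cycle length as you propose; the associated graded piece in cycle length $c$ is spanned by a core $c$-cycle with commutative forests attached, with $d_0$ the forest contractions. This is \emph{not} isomorphic to $\op{S}_{n,c-1}$, neither at the chain level nor after taking $d_0$-homology. Concretely, take $n=3$ (four legs) and $c=3$: the graded piece has dimension $6$ in edge degree $3$ (triangles, indexed by which pair of legs shares a vertex) and dimension $6$ in edge degree $4$ (triangles with one pendant edge carrying two legs), and $d_0$ carries the second basis bijectively onto the first, so this piece is acyclic; whereas $\op{S}_{3,2}$ has dimensions $(3,6)$ and $H_\ast(\op{S}_{3,2})\cong \mathbb{Q}^3$ by Theorem \ref{main}. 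The conceptual error is that a basis vector of $H_{c-1}(\Gamma_{1,m})$ from Lemma \ref{basislem} is not the same thing as ``a $c$-cycle with $m$ flags distributed around it'': what the cycle-length graded pieces compute (after Koszulity collapses the forests) is the genus $1$ \emph{Lie} graph complex, i.e.\ cycles with Lie-decorated vertices, and passing from that to $\mathsf{FT}(H_\ast(\Gamma))$ is exactly the homotopy-transfer content of \cite{WardMP}, which a filtration of $\mathsf{FT}(\overline{\mathsf{Com}})$ cannot give you for free. Two subsidiary claims are also false as stated: even-length cycles do not vanish at the chain level (only $c=2$ dies from the parallel-edge automorphism; a square with four distinctly labeled legs has trivial automorphism group, and its vanishing in homology is a consequence of the theorem, not of a $\det(E)$ symmetry), and your filtration never accounts for the cycle-length-zero stratum of trees with a genus $1$ vertex, which is genuinely present since $\overline{\mathsf{Com}}(1,m)\neq 0$.

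The paper's proof runs in the opposite logical direction, and the difference matters. It does not filter $\mathsf{FT}(\overline{\mathsf{Com}})$ at all; it takes the genus label filtration spectral sequence of the double Feynman transform $\FTGK(H_\ast(\FTGK(\Sigma\mathfrak{s}^{-1}\mathsf{Lie})))(1,n+1)$, which converges to \emph{zero} because $\FTGK^2$ preserves homology and $\Sigma\mathfrak{s}^{-1}\mathsf{Lie}$ vanishes in genus $1$. In that spectral sequence the $k=0$ row is $\FTGK(\overline{\mathsf{Com}})(1,n+1)$ and the rows $k>0$ are the Stirling complexes (Theorem \ref{isothm}); since the latter have homology concentrated in total degree $n$ (Theorem \ref{main}), the only way everything can cancel is for the higher differentials, which raise internal degree, to map $H_{n+1}$ of the $k=0$ row isomorphically onto $\bigoplus_{k>0\text{ even}}H_n(\op{S}_{n,k})$. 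So the relevant spectral sequence emphatically does not degenerate: the isomorphism asserted in the corollary \emph{is} the higher differential. Your argument needs degeneration, and needs it for an $E_1$ identification that is false; repairing it would mean first proving the commutative-to-Lie collapse on your associated graded and then rerunning the transfer argument, which amounts to reconstructing the paper's proof rather than shortcutting it.
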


\begin{proof}  
Consider the chain complex 
$$\FTGK(H_\ast(\Gamma))(1,n+1)
= \FTGK(H_\ast(\FTGK(\Sigma \mathfrak{s}^{-1} \mathsf{Lie})))(1,n+1).
$$  If it were the case that the modular operad $\FTGK(\Sigma \mathfrak{s}^{-1} \mathsf{Lie})$ were formal, then this complex would be acyclic, because $\FTGK^2(\Sigma\mathfrak{s}^{-1} \mathsf{Lie}(1,n+1))$ has the same homology as $\Sigma\mathfrak{s}^{-1} \mathsf{Lie}(1,n+1)$, which is zero by definition.  Alas, this modular operad is not formal (indeed we've computed a portion of its homology above).  However, in this setting, the bicomplex $\FTGK(H_\ast(\Gamma))(1,n+1)^{r,k}$
is the $E_0$ page of a spectral sequence converging to $0$ (called the genus label filtration spectral sequence, \cite[Section 4.5.1]{WardMP}).  

The higher pages of this spectral sequence have differentials inherited from the homotopy transfer theorem applied to the modular operad $H_\ast(\Gamma)$.  These differentials necessarily increase the internal degree.  However, above the row $k=0$, the $E_1$-page is concentrated in total degree $n$ (combining Theorems $\ref{main}$ and $\ref{isothm})$.  Therefore, the $k=0$ row of the $E_1$ page must be concentrated in degree $n+1$, and moreover must be isomorphic to the direct sum of the homologies of $\op{S}_{n,k}$ over all even $k>0$.  In particular, from Theorem $\ref{main}$ we conclude that the homology of $\FTGK(H_\ast(\Gamma))(1,n+1)^{\ast,0}$ is concentrated in a single degree and has rank equal to the number of even (or odd) permutations of $S_{n}$, namely $n!/2$.	

Finally we observe that the modular operads $\overline{\mathsf{Com}}$ and $H_0(\Gamma)$ (taking only the degree 0 summands in each biarity) coincide, hence $$
\FTGK(H_\ast(\Gamma))(1,n+1)^{\ast,0}= \FTGK(H_0(\Gamma))(1,n+1) = \FTGK(\overline{\mathsf{Com}})(1,n+1),
$$
which finishes the proof.	
\end{proof}

\begin{remark}  Before proving Theorem $\ref{main}$ above, I had calculated the result by hand for low values of $n$.  For example the case of $n+1=5$ is pictured in Figure 14 of \cite{WardMP}, but Theorem $\ref{main}$ was needed to ensure that the analogous picture holds for all $n$.
\end{remark}

The chain complexes which comprise the $\mathfrak{K}$-modular operad $\mathsf{FT}(\overline{\mathsf{Com}})$ coincide (up to a shift in degree) with the reduced cellular chains of the spaces $\Delta_{g,n}$, and so this corollary establishes Equation $\ref{decomp}$ given in the introduction.

\subsection{Representation Stability}\label{rs}

As mentioned in the introduction, Corollary $\ref{gccor}$ agrees with \cite{CGP2} on rank, but gives a different perspective on the $S_{n+1}$-module structure.  We end this article with a modest conjecture:

\begin{conjecture}\label{rep} For each $i$ the sequence of $S_{n+1}$-modules
$H_n(\op{S}_{n,n-i})$ is representation stable, after tensoring with the alternating representation.
\end{conjecture}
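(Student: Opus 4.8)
The plan is to show that, after tensoring with the alternating representation, the sequence $n \mapsto H_n(\op{S}_{n,n-i})$ underlies a finitely generated $\mathrm{FI}$-module in the variable $n+1$ (the number of legs), and then to invoke the theorem of Church--Ellenberg--Farb that a finitely generated $\mathrm{FI}$-module over $\Q$ is representation stable. The starting point is the explicit computation in the proof of Theorem $\ref{main}$: the homology $H_n(\op{S}_{n,k})$ is the direct sum, over set partitions of $\{1\cdc n\}$ into $k$ blocks, of $\tensor_b \mathsf{Lie}(|b|)$. Fixing the excess $i=n-k$ forces $\sum_b(|b|-1)=i$, so all but at most $i$ blocks are singletons and the non-singleton blocks involve at most $2i$ of the labels. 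Thus, as a sequence of $S_n$-modules under the obvious action, each term has the shape $\bigoplus_\mu \mathrm{Ind}_{S_{n-|\mu|}\times S_{|\mu|}}^{S_n}(\mathbf{1}\boxtimes U_\mu)$, where $\mu$ ranges over the finitely many partitions with all parts $\geq 2$ and $|\mu|-\ell(\mu)=i$, and $U_\mu$ is a fixed $S_{|\mu|}$-representation built from the Lie modules on the non-singleton blocks. Sequences of this induced form are exactly the finitely generated free $\mathrm{FI}$-modules, so representation stability for the plain $S_n$-action is essentially immediate; the content of the conjecture is the larger $S_{n+1}$-action.

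This is where the sign twist enters. Via Theorem $\ref{isothm}$ I would identify $H_n(\op{S}_{n,n-i})$ with the internal-degree $n-i$ summand of the genus $1$ graph homology $H_n(\mathsf{FT}(H_\ast(\Gamma)))(1,n+1)$, on which $S_{n+1}$ permutes all $n+1$ legs symmetrically while preserving the internal grading. In this model a homology class is supported on a tree with a single distinguished genus $1$ vertex whose adjacent flags carry the alternating representation $V_{1^{k+1}}$ of Lemma $\ref{hlem}$, with Lie-decorated branches attached. I would define the $\mathrm{FI}$-module structure by the operation of adjoining a new leg as a fresh singleton branch at the distinguished vertex; this raises both $n$ and $k$ by one and hence preserves $i$. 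Because the new branch is attached along an alternating flag, the naive adjunction map is well defined only up to sign; tensoring the whole $S_{n+1}$-module with $V_{1^{n+1}}$ cancels exactly this ambiguity, converting the signed permutation action on the alternating flags (and the root-flag bookkeeping of Equation $\ref{action}$) into the honest permutation action of an $\mathrm{FI}$-module on its set of legs.

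Granting that this structure is well defined and equivariant, finite generation reduces to bounding the support. Every class in the quotient complex $\op{S}_{n,k}/\op{A}$ computing the homology is, up to adjoining singleton branches, determined by the at most $i$ non-singleton branches together with the position of the root leg $0$; these involve boundedly many of the $n+1$ labels (at most $2i+1$). Hence the associated $\mathrm{FI}$-module is generated in degrees $\leq 2i+2$, and is therefore finitely generated, so Church--Ellenberg--Farb yields representation stability for the sequence $H_n(\op{S}_{n,n-i})\tensor V_{1^{n+1}}$.

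The hard part will be the construction in the second paragraph: verifying that the intricate $S_{n+1}$-action---in particular the rule of Equation $\ref{action}$, which rewrites a distinguished output flag as a signed sum over the remaining inputs---really does assemble, after the alternating twist, into a strict $\mathrm{FI}$-module with equivariant, injective transition maps. Concretely, one must check that adjoining a singleton branch commutes with the differential on the nose, so that it descends to homology; that the sign introduced by the new alternating flag in $\det(A)$ is precisely compensated by the alternating twist; and that moving the root $0$ into a branch does not enlarge the support beyond the stated bound. I expect the cleanest route to these compatibilities to run through the Feynman-transform description of Theorem $\ref{isothm}$ and the rewriting rule of \cite[Lemma 3.2]{WardWheels}, rather than through the hands-on Stirling-tree formulas, since in the graph-homology model both the $S_{n+1}$-symmetry and the leg-adding operation are manifest.
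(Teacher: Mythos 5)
You should first be aware of what you were asked to prove: the statement is Conjecture \ref{rep} of the paper, and the paper does \emph{not} prove it. The discussion in Subsection \ref{rs} is explicitly a heuristic sketch (``filling in all the details is a bit beyond the scope of this article''), so there is no proof of record to compare against --- only a proposed strategy, and your proposal is essentially that same strategy. Both you and the paper suggest: (a) an FI-module structure in which an injection $\{0\cdc n\}\hookrightarrow\{0\cdc n+l\}$ relabels the legs and adjoins the labels missing from the image as new alternating flags (your ``singleton branches'') at the distinguished vertex, which preserves $i=n-k$; (b) the alternating twist, whose role is exactly to make the ordering of the adjoined flags immaterial; (c) finite generation coming from boundedness of support --- the paper's generators are the trees whose distinguished vertex has valence $k+1$, and note that Lemma \ref{bounds} forces the edge count $r$ to satisfy $r\leq i$, which is what makes your bound of roughly $2i+2$ on generator degree work; and (d) the theorem of Church--Ellenberg--Farb.

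The one substantive difference is where the FI structure is placed, and here your route is actually the harder of the two. The paper puts the FI structure on the chain groups $\op{S}_{n,n-i,r}$ for each fixed edge number $r$, and then passes to homology through the $S_{n+1}$-equivariant Euler characteristic: since Theorem \ref{main} concentrates the homology in the single degree $n$, stability of each chain group gives multiplicity stability of the virtual character and hence of $H_n$, with no need for the transition maps to commute with the differential. You instead propose transition maps directly on homology, which obligates you to verify that adjoining a singleton branch is a chain map compatible with the twisted $S_{n+1}$-action of Equation \ref{action} --- precisely the verification you yourself flag as ``the hard part'' and do not carry out. Those deferred checks (well-definedness of the twisted FI structure, descent to homology, injectivity and equivariance of the transition maps) are exactly the details whose absence is the reason the statement is a conjecture rather than a theorem. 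So your proposal is a reasonable and faithful reconstruction of the paper's intended plan, but it should be regarded as a restatement of that conjectural plan, not as a proof; to claim the conjecture you would need to complete the chain-level FI verification, or find an argument avoiding it.
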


Although filling in all the details is a bit beyond the scope of this article, let me give some idea of why this seems likely.  The first step would be to show that for each $r$, the sequence $\op{S}_{n,n-i,r}\tensor V_{1^n}$ is a finitely generated FI-module.  Here $r$ is the number of edges.  Recall \cite{CEF} that an FI-module is simply a functor from the category of finite sets and injections.  For this, given an injection $\iota\colon\{0 \cdc n\}\hookrightarrow \{0 \cdc n+l\}$, and a tree whose leaves are labeled $\{0\cdc n\}$, we can simply relabel leg $j$ with $\iota(j)$, and then add alternating flags to the distinguished vertex which are labeled by the elements not in the image of $\iota$.  The factor of $V_{1^n}$ ensures that the order of the added flags plays no role.

This putative FI-module would then be finitely generated by those trees whose distinguished vertex has valence $k+1$, and so the general theory of finitely generated FI-modules \cite{CEF} would apply to show that each sequence $\op{S}_{n,n-i,r}$ is representation stable.  Once this is established, it would be straight-forward to argue that the $S_{n+1}$-equivariant Euler characteristic, and hence the homology, is representation stable as well.  Having established representation stability, a next step would then be to determine the stable range and calculate some stable multiplicities.

Let us give some confirmation of this conjecture in the case of small $i$.  When $i=0$, we have already seen that
$H_n(\op{S}_{n,n})$ is the alternating representation of $S_{n+1}$, which is representation stable after tensoring with the sign representation.  

When $i=1$, each complex $\op{S}_{n,n-1}$ is only non-zero in two degrees corresponding to number of edges $r=0$ and $r=1$.  From \cite{CHKV} we know
$\op{S}_{n,n-1,0} \cong V_{2,1^{n-1}}$ 
and we may compute  
$\op{S}_{n,n-1,1}$ 
as an induced representation using the Littlewood-Richardson rule \cite{FH} to be $V_{2,1^{n-1}}\oplus V_{3,1^{n-2}}$.  Thus, by Theorem $\ref{main}$ the homology is  $H_n(\op{S}_{n,n-1})\cong V_{3,1^{n-2}}$, which again is representation stable after tensoring with the alternating (aka sign) representation.

Finally, when $i=2$ we can determine $H_n(\op{S}_{n,n-2})$ via Theorem $\ref{main}$ by calculating the $S_{n+1}$-equivariant Euler characteristic.  This requires $n\geq 4$ and just by dimension considerations, after Lemma \ref{basics} (3), $n=4$ must be outside the stable range.  However, in the case $n=5$ we find, by a slightly tedious tabulation of characters, that
$$H_n(\op{S}_{n,n-2})\cong V_{3,3}\oplus V_{2,2,1^2}\oplus V_{3,2,1}\oplus V_{5,1},$$
and using the hook length formula it is a straight forward computation to show that the stable sequence of $S_n$-modules which continues this $S_6$-representation (after tensoring with the sign representation) has dimension equal to $\ds\frac{1}{4}\ds{n \ds\choose 3}(3n-1)$, as predicted by Lemma $\ref{basics}$.

To conclude, let me remark that it would be interesting to use the semi-classical modular operadic character \cite{GetSC} to compute the character polynomials of each of these putative representation stable sequences.  Indeed, if we restrict attention to the modular operad whose only non-zero terms are $H_0(\Gamma_{0,n})$ and $H_k(\Gamma_{1,n})$, ranging over all $n$, the character of the Feynman transform can be read off from the results of {\it op.\ cit.}, and Conjecture $\ref{rep}$ implies, after \cite[Theorem 1.5]{CEF}, that the characters of these sequences will eventually be polynomials.


\appendix

\section{Trees and Graphs}
In an attempt to keep this article reasonably self-contained, we spell out our conventions regarding graphs and trees in this appendix.  We refer to e.g.\ \cite{BM}, \cite{KW} and \cite{WardMP} for more detail about this viewpoint.

\subsection{Graphs: Definition and basic terminology}\label{graphs}

\begin{definition}  A graph $\gamma=(V,F,a,\iota)$ is the following data:
	\begin{itemize}  
		\item A finite non-empty set $V$ called the vertices of the graph.
		\item A finite set $F$ called the flags of the graph.
		\item A function $a\colon F \to V$.
		\item A function $\iota\colon F \to F$ with the property that $\iota^2$ is the identity function.\end{itemize}
\end{definition}

Fix a graph $\gamma=(V,F,a,\iota)$ and let $v\in V$ and $f\in F$.  When discussing graphs we use the following terminology.  If $a(f)=v$ we say that $f$ is {\bf adjacent} to $v$.  The {\bf valence} of $v$ is $|a^{-1}(v)|$.  If $\iota(f)=f$ we say $f$ is a {\bf leg} of $\gamma$.  If $\iota(f)\neq f$ we say that the unordered pair $\{f,\iota(f)\}$ is an {\bf edge} of $\gamma$.  The set of edges of $\gamma$ is denoted $E(\gamma)$ or just $E$ if the context is clear.  The set of legs of $\gamma$ will be denoted $\text{leg}(\gamma)$.  Observe 
$
|F|=2|E|+|\text{leg}(\gamma)|.
$
An edge $\{f,\iota(f)\}$ will be called a {\bf loop} if $a(f)=a(\iota(f))$ and will be called a {\bf bridge} if $a(f)\neq a(\iota(f))$.  


A {\bf path} in a graph $\gamma$ is a finite sequence of flags $\{f_1\cdc f_r\}$ such that for each pair $f_i$ and $f_{i+1}$ either $a(f_i)=a(f_{i+1})$ or $\iota(f_i)=f_{i+1}$.  The length of a path is the number of flags in the sequence.  Given such a path, if $a(f_1)=v$ and $a(f_r)=w$ then the path is said to run from $v$ to $w$.  We specifically allow the empty path from any vertex to itself, considered to be of length $0$.

A {\bf subgraph} of $\gamma$ is a pair of subsets $V^\prime \subset V$ and $F^\prime \subset F$ which are closed under $a$ and $\iota$.    In a graph $\gamma$, define an equivalence relation on the vertices $V$ via $v\sim w \iff \exists $ a path running from $v$ to $w$.  
Let $V_1\cdc V_c$ be the equivalence classes of this relation.  Partition $F$ by defining $F_i:=\{f\in F: a(f)\in V_i\}$.  It is straightforward to show that each $(V_i,F_i)$ is a subgraph of $\gamma$, we call these subgraphs the {\bf connected components} of $\gamma$.  The {\bf genus} of a graph having $c$ components is defined to be $\beta_1(\gamma)=c-|V|+|E|$.  If $c=1$ we say that $\gamma$ is {\bf connected.}  It is an elementary exercise to show that given a connected graph of genus $0$ along with vertices $v$ and $w$, there is a unique shortest path running from $v$ to $w$.

\subsection{Modular Graphs}\label{modgraphs}

A {\bf genus labeled} graph is a graph along with a function $g\colon V\to \mathbb{Z}_{\geq 0}$.  We call the non-negative integer $g(v)$ the genus label of the vertex.  If $\gamma$ is a genus labeled graph, we define its {\bf total genus} to be  
$$g(\gamma) := \beta_1(\gamma)+ \ds\sum_{v\in V} g(v).$$  A genus labeled graph is said to be {\bf stable} if $2g(v)+|a^{-1}(v)|\geq 3$ for each $v\in V.$

A {\bf modular graph} of type $(g,n)$ refers to a stable, connected graph of total genus $g$ along with a bijection $\text{leg}(\gamma)\cong \{1\cdc n\}$.  We call the number associated to a given leg its labeling.  Morphisms, in particular isomorphisms, of modular graphs are presumed to preserve the genus labels and leg labels.

\subsection{Trees}\label{trees}

A {\bf tree} is a connected graph of genus $0$.  Trees will often be denoted $\mathsf{t}$.  A tree is stable if $|a^{-1}(v)|\geq 3$ for each $v\in V$.   An $n$-tree is a tree with $n+1$ legs, along with a bijection between the sets $\{0\cdc n\}$ and $\text{leg}(\mathsf{t})$, called the leg labeling.  The leg labeled by $0$ is called the root of an $n$-tree.  The vertex adjacent to the root is called the root vertex.  

In an $n$-tree, each vertex has a distinguished adjacent flag called the output.  The output of the root vertex is defined to be the root.  For any other vertex, call it $v$, the output is defined to be the first flag on the unique shortest path running from $v$ to the root vertex.  All flags which are not output flags are called input flags.  Morphisms, in particular isomorphisms, of $n$-trees are presumed to preserve the leg labels.  In particular, relabeling the legs can produce a non-isomorphic $n$-tree.

By convention, a stable $n$-tree determines a modular graph of type $(0,n+1)$ by declaring that each vertex is of genus $0$ and by relabeling the legs according to the isomorphism $\{0\cdc n\} \cong \{1\cdc n+1\}$ which sends $0$ to $n+1$ and fixes the set $\{1\cdc n\}$.  

\subsection{Graphical depiction of a graph.}

To a graph we may associate a diagram in which the vertices are depicted as nodes and the flags are depicted as arcs or line segments.  If $a(f)=v$, the line segment corresponding to $f$ is drawn adjacent to $v$ at one end, call it the vertex end.  If $f\neq \iota(f)$ then these two flags are joined at their non-vertex ends to form a graphical depiction of the edge.  The legs of the graph are depicted as line segments with loose ends.  Leg and genus labels may be added to the diagram to depict modular graphs and/or $n$-trees (see Figure $\ref{fig:diagram}$).  A graph can in turn be extracted from such a diagram, although the underlying graph depends only on the combinatorics of the diagram and not on any embedding in Euclidean space. 

\begin{figure}
	\centering
	\includegraphics[scale=.95]{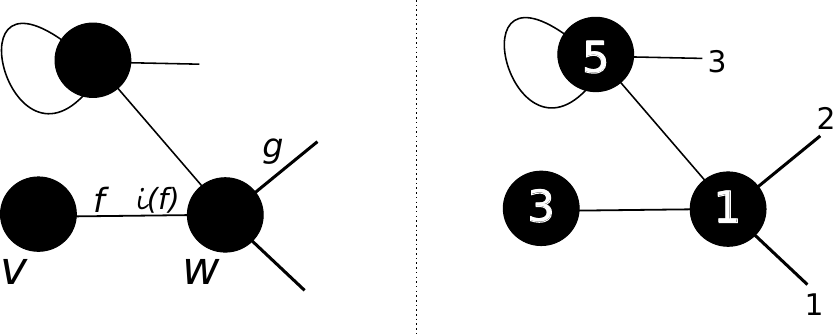}
	\caption{Left: a diagramatic depiction of a graph with three vertices and 9 flags having two bridges, one loop and three legs.  The labeled vertices $v$ and $w$ and labeled flags $f$ and $g$ satisfy $a(f)=v$, $a(g)=w$ and $\iota(g)=g$.
	Right:  adding labels to indicate a modular graph of type (10,3). }
	\label{fig:diagram}
\end{figure}

\end{document}